\newcommand{\C}{\mathbb{C}}
\newcommand{\Z}{\mathbb{Z}}
\newcommand{\R}{\mathbb{R}}
\newcommand{\N}{\mathbb{N}}
\newcommand{\be}{\begin{equation}}
\newcommand {\ee} {\end{equation}}
\newcommand{\f}{\frac}
\newcommand{\ol}{\overline}
\newcommand{\hatt}{\widehat}
\newcommand{\beq}{\begin{equation}}
\newcommand{\eeq}{\end{equation}}
\newcommand{\bdm}{\begin{displaymath}}
\newcommand{\edm}{\end{displaymath}}
\newcommand{\ba}{\begin{align}}
\newcommand{\ea}{\end{align}}
\newcommand{\bpf}{\begin{proof}}
\newcommand{\epf}{\end{proof}}
\newcommand{\supp}{\mathrm{supp}\, }               
\newcommand{\veps}{\varepsilon}
\newcommand{\dav}{{d_{\mathrm{av}}}}
\newcommand{\les}{\lesssim}
\newcommand{\lam}{{\lambda}}
\newcommand{\vp}{{\varphi}}
\newcommand{\de}{{\delta}}
\newcommand{\De}{{\Delta}}
\newcommand{\ka}{{\kappa}}
\newcommand{\utwo}{{U_\Delta^2}}
\newcommand{\up}{{U_\Delta^p}}
\newcommand{\vtwo}{{V_\Delta^2}}
\newcommand{\mrho}[1]{\rho_{\raisebox{-2.3pt}{$\scriptstyle #1$}}}
\newcommand{\tmrho}[1]{\widetilde\rho_{\raisebox{-2.3pt}{$\scriptstyle #1$}}}
\def\normo#1{\left\|#1\right\|}
\def\wt#1{\widetilde{#1}}
\def\wh#1{\widehat{#1}}
\newtheorem{theorem}{Theorem}
\newtheorem{proposition}[theorem]{Proposition}
\newtheorem{lemma}[theorem]{Lemma}
\theoremstyle{definition}
\newtheorem{remark}[theorem]{Remark}
\newcounter{theoremi}[theorem]
\numberwithin{theorem}{section}
\numberwithin{equation}{section}
\newcounter{smalllist}
\newcounter{listi}
\newenvironment{theoremlist}{\begin{list}{{\rm(\roman{listi})}}{%
\setlength{\topsep}{0mm}\setlength{\parsep}{0mm}\setlength{\itemsep}{0mm}%
\setlength{\labelwidth}{1.5em}\setlength{\leftmargin}{1.7em}\usecounter{listi}%
}}{\end{list}}
\newcounter{smallenum}
\newcounter{assumptions}
\begin{document}
\title[Scattering for dispersion managed NLS]{Scattering for the dispersion managed nonlinear Schr\"odinger equation}
\author{Mi-Ran Choi, Kiyeon Lee, Young-Ran Lee}

\address{Department of Mathematics, Sogang University, 35 Baekbeom--ro,
  Mapo--gu, Seoul 04107, South Korea.}%
\email{mrchoi@sogang.ac.kr}

\address{Stochastic Analysis and Application Research Center(SAARC), Korea Advanced Institute of Science and Technology, 291 Daehak-ro, Yuseong-gu, Daejeon 34141, South Korea.}
\email{kiyeonlee@kaist.ac.kr}

\address{Department of Mathematics, Sogang University, 35 Baekbeom--ro,
  Mapo--gu, Seoul 04107, South Korea.}%
\email{younglee@sogang.ac.kr}
\begin{abstract}
We consider the dispersion managed nonlinear Schr\"dinger equations with quintic and cubic nonlinearities in one and two dimensions, respectively. We prove the global well-posedness and scattering in $L_x^2$ for small initial data employing the $U^p$ and $V^p$ spaces.
\end{abstract}

\date{\today}
\subjclass[2020]{35Q55,37K60,35Q60}
\keywords{Dispersion management, nonlinear Schr\"odinger equations, scattering, global well-posedness, $U^p$--$V^p$ spaces.}
\maketitle

\section{Introduction}\label{introduction}

We consider the Cauchy problem for the dispersion managed nonlinear Schr\"odinger equation (NLS) in one and two dimensions
\beq\label{eq:main}
\begin{cases}
   \displaystyle{i\partial_t u+ \dav \Delta u+ \int_0^1 e^{-ir\Delta}(|e^{ir\Delta} u|^{2\sigma}e^{ir\Delta} u)dr=0},\\
   u(0,\cdot)=u_0 ,
\end{cases}
\eeq
where $u=u(t,x):\R\times \R^d \to \C$, $d=1,2$, $\dav\in \R$, $\sigma >0$, and $e^{ir\De}$ is the linear propagator of the  Schr\"odinger equation.
The equation in \eqref{eq:main} is related to the NLS with a periodically varying coefficient
\beq\label{eq:original}
i\partial_t u +d(t)\Delta u+|u|^{2\sigma}u=0.
\eeq
In the regime of strong dispersion management, the dispersion $d(t)$ is a periodic function given by
\[
d(t)=\dav +\f{1}{\veps}d_0\left(\f{t}{\veps}\right),
\]
where $d_0$ is its mean zero part over one period, $\dav$ the average component, and $\veps$ a small positive parameter. Through an appropriate change of variables and averaging over one period, the resulting equation takes the form of \eqref{eq:main} in the model case, that is, $d_0$ is the $2$-periodic function with $d_0(t)=1$ on $[0,1)$ and $d_0(t)=-1$ on $[1,2)$, see \cite{GT1, GT2}. The validity of this averaging process is confirmed in \cite{CKL, CLA, ZGJT01}.

Equation \eqref{eq:original} arises in various physical scenarios, such as the propagation of electromagnetic pulses in optical fiber communication, beam propagation in waveguide arrays, and the study of nonlinear matter waves of Bose-Einstein condensates, see \cite{Abdullaev, Konotop, Serkin}. The balance between dispersion and nonlinearity is crucial for the existence of stable pulses. Dispersion management has proven incredibly successful in producing stable soliton-like pulses, see the review \cite{review1, review2}. Numerous well-known results regarding ground states, which generate solitons of equation \eqref{eq:main}, can be found in the literature, refer to \cite{CHLT, CHL, ZGJT01} for example.

The one-dimensional Cauchy problem \eqref{eq:main} is globally well-posed in $H^1(\R)$ for all $\sigma>0$ when $\dav <0$; for all $0<\sigma <4$ when $\dav>0$;  for all $0<\sigma <2$ in the singular case, i.e., when $\dav=0$, in \cite{AK, CHL, ZGJT01}. Recently, the existence of finite time blowup solutions has been shown in \cite{CHgL} for all $\sigma \ge 4$ when $\dav>0$.
We expect that analogous results can be shown for the two-dimensional problem, however, as of now, there are no such results.

We are interested in analyzing  the long-time behavior of the solution for \eqref{eq:main} when $\dav \neq 0$.
In \cite{MurphyHoose}, the authors established the optimal decay bound and \emph{modified} scattering of the solution for \eqref{eq:main} with small initial data in an appropriate weighted function space for the Kerr nonlinearity, $\sigma=1$, in one dimension. They proved this by adapting the Kato-Pusateri method from \cite{kapu2011}. However, to the best of our knowledge, there are no results available in the literature about the \emph{linear} scattering phenomenon of the solution and this is our main motivation for the present study. Now we present our main result on the global well-posedness and {\it linear} scattering in $L_x^2(\R^d)$ for small initial data.

\begin{theorem}\label{thm:scattering}
Assume $\dav\neq0$ and $\sigma =2/d$ for $d=1,2$.
Given initial data $u_0\in L_x^2(\R^d)$ with small enough $L_x^2$-norm, the Cauchy problem \eqref{eq:main} is globally well-posed in $L_x^2(\R^d)$. Moreover, the solution $u(t)$ scatters in $L_x^2(\R^d)$, that is,
 there exist $\vp_{\pm} \in L_x^2(\R^d)$ such that
\begin{equation}\label{def:scattering}
	\lim_{t\to\infty}\normo{u(t) - e^{\dav i t\De}\vp_+}_{L_x^2} = 0 \quad\mbox{and}\quad
    \lim_{t\to-\infty}\normo{u(t) - e^{\dav i t\De}\vp_-}_{L_x^2}=0. \notag
\end{equation}
\end{theorem}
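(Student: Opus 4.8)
The plan is to recast \eqref{eq:main} in Duhamel form,
\[
u=\Phi(u),\qquad \Phi(u)(t):=e^{\dav it\Delta}u_0-i\int_0^t e^{\dav i(t-s)\Delta}\calN(u)(s)\,ds,
\]
with $\calN(u)(t):=\int_0^1 e^{-ir\Delta}\big(|e^{ir\Delta}u(t)|^{2\sigma}e^{ir\Delta}u(t)\big)\,dr$, and to solve it by a contraction in the adapted space $X:=U^2_{\dav\Delta}(\R;L_x^2(\R^d))$, paired in the usual way with $V^2_{\dav\Delta}$. Three facts from the general $U^p$--$V^p$ theory are needed, all available because $\dav\ne0$ makes $e^{\dav it\Delta}$ a genuine dispersive Schr\"odinger group: the linear bound $\|e^{\dav it\Delta}u_0\|_X\le\|u_0\|_{L_x^2}$; the Duhamel estimate $\big\|\int_0^t e^{\dav i(t-s)\Delta}F(s)\,ds\big\|_X\lesssim\|F\|_N$, where $\|F\|_N:=\sup\{|\iint F\,\overline v\,dx\,dt|:\ \|v\|_{V^2_{\dav\Delta}}\le1\}$; and the embedding $X\hookrightarrow C(\R;L_x^2)$. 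The only problem-specific ingredient is the multilinear estimate $\|\calN(u)\|_N\lesssim\|u\|_X^{2\sigma+1}$ and its difference counterpart.

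That estimate is the heart of the argument, and it is where the mass-critical choice $\sigma=2/d$ and the averaged structure of $\calN$ both enter. Put $q:=2(\sigma+1)=2(d+2)/d$, so that $(q,q)$ is the diagonal $L^2$-admissible Strichartz pair in dimension $d$ (that is, $q=6$ for $d=1$ and $q=4$ for $d=2$, both with $q>2$), and $\||w|^{2\sigma}w\|_{L^{q'}_{t,x}}=\|w\|_{L^q_{t,x}}^{2\sigma+1}$. Since $e^{-ir\Delta}$ is unitary, for any $v$ with $\|v\|_{V^2_{\dav\Delta}}\le1$ we may move it onto the test function and apply H\"older:
\[
\Big|\iint \calN(u)\,\overline v\,dx\,dt\Big|\le\int_0^1\Big|\iint |e^{ir\Delta}u|^{2\sigma}e^{ir\Delta}u\;\overline{e^{ir\Delta}v}\,dx\,dt\Big|\,dr\le\int_0^1\|e^{ir\Delta}u\|_{L^q_{t,x}}^{2\sigma+1}\,\|e^{ir\Delta}v\|_{L^q_{t,x}}\,dr.
\]
The point that renders the dispersion management essentially free is that the fixed-in-time unitary $e^{ir\Delta}$ commutes with $e^{\dav it\Delta}$, hence maps $U^2_{\dav\Delta}$ and $V^2_{\dav\Delta}$ isometrically onto themselves; combining this with the transference principle and the embeddings $U^2_{\dav\Delta}\hookrightarrow L^q_{t,x}$ and $V^2_{\dav\Delta}\hookrightarrow U^q_{\dav\Delta}\hookrightarrow L^q_{t,x}$ (valid since $q>2$), we obtain $\|e^{ir\Delta}u\|_{L^q_{t,x}}\lesssim\|u\|_X$ and $\|e^{ir\Delta}v\|_{L^q_{t,x}}\lesssim\|v\|_{V^2_{\dav\Delta}}\le1$, uniformly in $r\in[0,1]$. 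Integrating the resulting constant in $r$ gives $\|\calN(u)\|_N\lesssim\|u\|_X^{2\sigma+1}$, and replacing one factor of $u$ by a difference and using $\big||z_1|^{2\sigma}z_1-|z_2|^{2\sigma}z_2\big|\lesssim(|z_1|^{2\sigma}+|z_2|^{2\sigma})|z_1-z_2|$ gives $\|\calN(u)-\calN(v)\|_N\lesssim(\|u\|_X^{2\sigma}+\|v\|_X^{2\sigma})\|u-v\|_X$.

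With these bounds, $\|\Phi(u)\|_X\lesssim\|u_0\|_{L_x^2}+\|u\|_X^{2\sigma+1}$ and the difference estimate show that, for $\|u_0\|_{L_x^2}$ small enough, $\Phi$ is a contraction on a small ball of $X$; its unique fixed point is the global solution, and uniqueness, continuity in $t$ with values in $L_x^2$, and continuous dependence on $u_0$ follow from the contraction together with $X\hookrightarrow C(\R;L_x^2)$. For scattering, set $\vp_+:=u_0-i\int_0^\infty e^{-\dav is\Delta}\calN(u)(s)\,ds$; subtracting the Duhamel identity for $u(t)$ from $e^{\dav it\Delta}\vp_+$ gives $u(t)-e^{\dav it\Delta}\vp_+=i\int_t^\infty e^{\dav i(t-s)\Delta}\calN(u)(s)\,ds$, and running the pairing estimate above on $(t,\infty)$ — dualizing the remaining factor against $e^{\dav is\Delta}\psi$ with $\|\psi\|_{L_x^2}=1$ and using plain Strichartz for it — yields, for $t>0$,
\[
\|u(t)-e^{\dav it\Delta}\vp_+\|_{L_x^2}\lesssim\int_0^1\|e^{ir\Delta}u\|_{L^q((t,\infty)\times\R^d)}^{2\sigma+1}\,dr.
\]
Since $\|e^{ir\Delta}u\|_{L^q(\R\times\R^d)}\lesssim\|u\|_X<\infty$ uniformly in $r$, for each $r$ the tail on the right tends to $0$ as $t\to\infty$, and dominated convergence in $r$ (with dominating function $r\mapsto C\|u\|_X^{2\sigma+1}$) shows the whole right-hand side tends to $0$; the same estimate on a window $(T_1,T_2)$ shows the integral defining $\vp_+$ converges in $L_x^2$, so $\vp_+\in L_x^2$. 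The negative-time case is identical with $\int_{-\infty}^0$. The one genuine obstacle is the bookkeeping in the nonlinear estimate — verifying that the $r$-average passes outside every norm by Minkowski and that $e^{ir\Delta}$ is an isometry of the adapted spaces — after which the problem reduces to the classical mass-critical small-data theory.
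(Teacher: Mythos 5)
Your proposal is correct, and it reaches the theorem by a genuinely lighter route than the paper's. The two arguments share the same structural core: the contraction is closed in a $\utwo$-based space via the duality with $\vtwo$ (Lemma \ref{lem:duality}), and the dispersion-management propagator is disposed of by moving $e^{-ir\De}$ onto the dual test function and using that $e^{ir\De}$ commutes with $e^{\dav it\De}$ and hence acts isometrically on $\utwo$ and $\vtwo$, i.e.\ \eqref{eq:persistence-vp}--\eqref{eq:persistence-up}. Beyond that they diverge. The paper works in the Besov-type norm $\bigl(\sum_N\|P_Nu\|_{\utwo}^2\bigr)^{1/2}$, splits the multilinear pairing into dyadic frequency blocks, and sums them using the bilinear Strichartz estimates of Lemma \ref{lem:bilinear} and Proposition \ref{prop:bilinear-up} (with off-diagonal gains in $N_{12}^{\min}/N_{12}^{\max}$), the logarithmic interpolation Lemma \ref{lem:log-inter} in two dimensions, and Young's inequality on dyadic scales; within that scheme the bilinear gain is genuinely needed to convert an $\ell^1$ sum into an $\ell^2$ one. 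You bypass the decomposition entirely: a single H\"older at the diagonal mass-critical exponent $q=2(d+2)/d$ (the bookkeeping $(2\sigma+1)q'=q$ and admissibility of $(q,q)$ both pin down $\sigma=2/d$) plus the transferred embeddings $\utwo\hookrightarrow U_\De^q\hookrightarrow L_{t,x}^q$ and $\vtwo\hookrightarrow U_\De^q\hookrightarrow L_{t,x}^q$, valid since $q>2$, already close the estimate in plain $\utwo$. Your scattering step likewise replaces Lemma \ref{lem:limit} by vanishing of $L_{t,x}^q$ tails plus dominated convergence in $r$, and you correctly dualize the tail against $e^{\dav is\De}\psi$ using plain Strichartz rather than a $V^2$ bound (the constant path $\psi$ is not in $V^2$). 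What the paper's heavier, frequency-localized machinery buys is exactly what Remark \ref{rem:general-regul} advertises: the extension to $H^s$, $s>0$, and to $\sigma=2/d+k$ is immediate there, and the setup is the natural one for going beyond small data. For the $L^2$ small-data statement itself, your argument is complete and shorter.
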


\smallskip

\begin{remark}
The global well-posedness and scattering  in $H^s(\R^d)$, $s > 0$, can analogously be shown for small initial data, see Remark \ref{rem:general-regul}.
\end{remark}

To prove this theorem, as usual, we consider the Duhamel formula
\begin{align}\label{eq:duhamel}
u(t) = e^{\dav it\Delta}u_0 + i \int_0^t e^{ \dav  i (t-s)\Delta} \int_0^1 e^{-ir\Delta}\left(|e^{ir\Delta} u|^{4/d} e^{ir\Delta} u \right)\!(s)\,dr ds.
\end{align}
The global well-posedness of \eqref{eq:duhamel} holds for {\it every} initial data in $L_x^2(\R^d)$ without imposing any restriction on the size of the initial data. This can be shown by applying the contraction mapping argument combined with mass conservation, relying on the estimates  in $L_x^2(\R^d)$ for the nonlinear part of \eqref{eq:duhamel}, see, e.g., \cite[Lemma 2.5]{CHL} for the one dimensional case. Such an approach is standard for proving the well-posedness of the classical NLS as well as the dispersion managed NLS.

However, it is not possible to prove the scattering result for the Cauchy problem \eqref{eq:main} via the well-known argument for the classical NLS, based on Strichartz's estimates.
This limitation arises from  the effect of the linear propagator $e^{-ir\Delta}$ in nonlinearity, which restricts the choice of the Strichartz admissible pairs for the entire time interval. Consequently, we are unable to use the contraction mapping argument for the suitable space-time norms to obtain the scattering result.
Fortunately, we managed to overcome this difficulty by deriving bilinear estimates in Proposition \ref{prop:bilinear-up} for functions in the adapted $U^p$ and $V^p$ spaces when $\sigma=2/d$. The $U^p$ and $V^p$ spaces were first introduced by H. Koch with D. Tataru \cite{kota2005, kota2007} and  N. Wiener \cite{Wiener}, respectively. This scattering in $L_x^2(\R^d)$ for small initial data, stated in Theorem \ref{thm:scattering}, is the first result on the linear scattering for the dispersion managed NLS. Furthermore, our approach establishes well-posedness and scattering for \eqref{eq:main} with $\sigma= 2/d +k$ in $H^{s}(\R^d)$, where $k \in \N$ and $s=d/2-1/\sigma$.

\bigskip

The plan for the paper is as follows:
In the next section, we introduce function spaces and their properties with the notations which are used in the paper frequently. In Section \ref{sec:bilinear}, we prove bilinear estimates which are main ingredients for proving our main result. We close the paper by giving the proof of Theorem \ref{thm:scattering} via contraction mapping argument in Section \ref{sec:mainproof}.


\section{Preliminaries}\label{sec:pre}
\subsection{Notations}

We use the convention $f\lesssim g$ or $g\gtrsim f$, if there exists a finite constant $C>0$ such that $f\leq Cg$. If $f\lesssim g$ and $f\gtrsim g$, we write  $f \approx g$.

We denote by $ L_t^q(I; L_x^r(\R^d))$, for $1\leq q,r <\infty$ and an interval $I\subset \R$, the space of all functions $u$ for which
\[
\|u\|_{L_t^q(I; L_x^r(\R^d))}= \left(\int_I \left(\int_{\R^d}|u(t,x)|^{r}dx\right)^{\f{q}{r}}dt\right)^{\f{1}{q}}
\]
is finite. If $q=\infty$ or $r=\infty$, use the essential supremum instead, respectively.
 For notational simplicity, we write $L_t^qL_x^r$ for $L_t^q(I; L_x^r(\R^d))$ and $L^q_{t,x}$ or $L_{t,x}^q(I\times \R^d)$ for $L_t^q(I; L_x^q(\R^d))$.
For $1\leq p\leq \infty$, we define the dual exponent $1\leq p'\leq \infty$ by
\[
\f{1}{p}+\f{1}{p'}=1.
\]

The Fourier transform is defined by
$$
\widehat{f}(\xi) = \mathcal{F}(f)(\xi) = \int_{\mathbb{R}^d} e^{- ix\cdot \xi} f(x)\,dx
$$
and the space-time Fourier transform is defined by 
$$
\mathcal{F}_{t,x}(F)(\tau, \xi) = \int_{\R} \int_{\mathbb{R}^{d}} e^{- it \tau}e^{- ix\cdot \xi} F(t,x)\,dx dt.
$$

Let $\rho$ be a Littlewood-Paley function, i.e., a smooth cut-off function satisfying $\rho(\xi) = 1$  for $|\xi|\le 1$ and supported in $|\xi|\le 2$.
Denote by $2^\Z$ the set of dyadic numbers $N=2^j$ where $j\in \Z$.
We define  $\rho_1(\xi):= \rho(\xi)$ and $\mrho{N}(\xi):= \rho (\xi/N)-\rho(2\xi/N)$ for a dyadic number $N \ge 2$.
Note that $\sum_{N \ge 1} \mrho{N}(\xi)=1$.
Then the frequency projection $P_N$ is defined by
\[
\mathcal{F}(P_N f)(\xi) := \mrho{N}(\xi)\widehat{f}(\xi).
\]
For any $N_1, N_2 \in 2^\Z$, $N_1 \sim N_2$ means $2^{-10}N_1 < N_2 < 2^{10}N_1$ while $N_1 \ll N_2$ or $N_2 \gg N_1$ means $2^{10}N_1 \le N_2$. We also denote  $\max(N_1,N_2)$ and $\min(N_1,N_2)$ by $N_{12}^{\max}$ and $N_{12}^{\min}$, respectively.\\

\subsection{$U^p-V^p$ and adapted function spaces}\label{sec:pre2}
As we mentioned above, we introduce $U^p$ and $V^p$ spaces and discuss their properties. These function spaces were introduced by Koch-Tataru \cite{kota2005,kota2007} for NLS.
Later these spaces have been exploited to show the low regularity well-posedness and scattering results of several dispersive equations and related ones, as a useful replacement of $X^{s,b}$-spaces in the limiting cases. We refer to \cite{haheko2009, haheko2010-2, kotavi-book} for these properties and explicit definitions for general dispersive equations.

To state the definitions we need, let $\mathcal{P}$ be the collection of finite partitions $\{ t_0 , \cdots, t_K\}$ satisfying
\[
-\infty < t_0 < \cdots < t_K \le \infty.
\]
If $t_K =\infty$, by convention, $u(t_K) := 0$ for any $u : \mathbb R \to L_x^2(\mathbb R^d)$.
For $1 \le p < \infty$, let us define a $U^p$-atom by a step function $a : \mathbb{R} \to L_x^2(\mathbb R^d)$ of the form
$$
a(t) = \sum_{k=1}^K\phi_k \chi_{[t_{k-1},t_k)}(t) \quad \mbox{ with } \quad\sum_{k=1}^K\|\phi_k\|_{L_x^2}^p=1.
$$
Then the $U^p$ space is defined by
$$
U^p := \Biggl\{ u = \sum_{j=1}^\infty \lam_j a_j : a_j \mbox{ are } U^p\mbox{-atoms, }\lam_j \in \C \mbox{ with }\sum_{j=1}^\infty|\lam_j|<\infty \Biggr\}
$$
with the equipped norm
$$
\|u\|_{U^p}:= \inf \left\{ \sum_{j=1}^\infty|\lam_j| : u = \sum_{j=1}^\infty \lam_j a_j,\ a_j \mbox{ are } U^p\mbox{-atoms, }\lam_j \in \C\right\}.
$$
We next define $V^p$ as the space of all right-continuous functions $v : \mathbb{R} \to L_x^2$ satisfying $v(t) \to 0$ as $t \to -\infty$ and the norm
$$
\|v\|_{V^p} := \sup_{\{t_k\} \in \mathcal{P}} \left( \sum_{k=1}^K\|v(t_k)  - v(t_{k-1})\|_{L_x^2}^p \right)^{\frac1p}
$$
is finite.


We gather the properties of the $U^p$ and $V^p$ spaces we need, including embeddings, duality, and logarithmic interpolation. We begin with the embedding property which follows from Proposition 2.2 and Corollary 2.6 in \cite{haheko2009}. To obtain this embedding property, the conditions that $v$ is right-continuous and that $v(t) \to 0$ as $t \to -\infty$ in the definition of $V^p$ space are necessary, see \cite{haheko2009} for more details.
\begin{lemma}\label{lem:embedd}
Let $1 \le p < q <\infty$, then
	\begin{theoremlist}
		\item $U^p$ and $V^p$ are Banach spaces.
		\item The embeddings $U^p \hookrightarrow V^p \hookrightarrow U^q \hookrightarrow L^{\infty}(\mathbb{R};L_x^2) $ are continuous.
	\end{theoremlist}	
\end{lemma}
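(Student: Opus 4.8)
The statement is entirely standard $U^p$--$V^p$ machinery; as the excerpt itself notes, it is \cite[Proposition~2.2 and Corollary~2.6]{haheko2009}, and the cleanest write-up would simply cite that. If I were to reconstruct it, here is the route I would take. The plan is to first dispose of the normed-space axioms together with the cheap but pivotal bound $\|u\|_{L^\infty(\R;L_x^2)}\le\|u\|_{U^p}$ and $\|v\|_{L^\infty(\R;L_x^2)}\le\|v\|_{V^p}$, then prove completeness, and finally establish the chain of embeddings, the only substantial point being $V^p\hookrightarrow U^q$ for $p<q$.

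For the normed-space structure: homogeneity is immediate in both cases; the triangle inequality for $\|\cdot\|_{U^p}$ follows by concatenating atomic representations of $u$ and $w$ to get one for $u+w$ whose coefficient sum is the sum of the two (then take the infimum), and for $\|\cdot\|_{V^p}$ it follows from Minkowski's inequality in $\ell^p$ applied partition-by-partition. For definiteness I would record the embedding into $L^\infty(\R;L_x^2)$: any $U^p$-atom $a=\sum_k\phi_k\chi_{[t_{k-1},t_k)}$ has $\|a(t)\|_{L_x^2}\le 1$ for every $t$ since $\|\phi_k\|_{L_x^2}^p\le\sum_j\|\phi_j\|_{L_x^2}^p=1$, so summing an atomic representation gives $\|u\|_{L^\infty(\R;L_x^2)}\le\|u\|_{U^p}$; and for $v\in V^p$, testing the norm against a two-point partition $\{s,t\}$ and letting $s\to-\infty$ (where $v(s)\to0$) gives $\|v(t)\|_{L_x^2}\le\|v\|_{V^p}$. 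Thus $\|u\|_{U^p}=0$ or $\|v\|_{V^p}=0$ forces the function to vanish identically, and moreover the last embedding $U^q\hookrightarrow L^\infty(\R;L_x^2)$ of (ii) is already in hand.

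Completeness of $U^p$ I would get from the standard criterion that a normed space is Banach as soon as every absolutely convergent series converges: given $\sum_n\|u_n\|_{U^p}<\infty$, pick atomic representations $u_n=\sum_j\lambda_{n,j}a_{n,j}$ with $\sum_j|\lambda_{n,j}|\le\|u_n\|_{U^p}+2^{-n}$; the pooled family $\{\lambda_{n,j}a_{n,j}\}_{n,j}$ is an admissible representation of $\sum_n u_n$ with finite total coefficient mass, and the partial sums converge in $L^\infty(\R;L_x^2)$ by the previous paragraph, which identifies the limit. For $V^p$ I would argue directly: a $V^p$-Cauchy sequence is Cauchy in $L^\infty(\R;L_x^2)$, hence converges uniformly to some $v$; the uniform limit of right-continuous functions is right-continuous, and a uniform tail estimate shows $v(t)\to0$ as $t\to-\infty$ (this is exactly where those two standing hypotheses in the definition of $V^p$ are used); finally $\|v_n-v\|_{V^p}\to0$ and $v\in V^p$ follow by a partition-wise Fatou argument. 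The embedding $U^p\hookrightarrow V^p$ reduces, via $\|u\|_{V^p}\le\sum_j|\lambda_j|\,\|a_j\|_{V^p}$ and an infimum, to a uniform bound on $\|a\|_{V^p}$ over atoms: for any partition the nonzero successive differences of $a$ form a chain $\phi_{i_{m+1}}-\phi_{i_m}$ along an increasing index subsequence (with the conventions $\phi_0=\phi_{K+1}:=0$, since as $t$ increases $a$ runs through $0,\phi_1,\dots,\phi_K,0$), so each $\|\phi_k\|_{L_x^2}^p$ is charged at most twice and $\|a\|_{V^p}^p\le 2^p\sum_k\|\phi_k\|_{L_x^2}^p=2^p$ by convexity.

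The heart of the matter is $V^p\hookrightarrow U^q$ with $p<q$. Fix $v$ with $\|v\|_{V^p}=1$ (homogeneity). For each integer $n\ge0$ I would run a greedy stopping-time construction at level $2^{-n}$: set $t_0^n=-\infty$ and inductively $t_{k+1}^n=\inf\{\,t>t_k^n:\ \|v(t)-v(t_k^n)\|_{L_x^2}\ge 2^{-n}\,\}$, with $\inf\emptyset=\infty$. Right-continuity of $v$ gives $\|v(t_k^n)-v(t_{k-1}^n)\|_{L_x^2}\ge 2^{-n}$ at every finite stopping time, so testing the $V^p$-norm against $\{t_k^n\}$ (and $v(-\infty)=0$) forces the number $K_n$ of finite stopping times to satisfy $K_n\,2^{-np}\le\|v\|_{V^p}^p=1$, i.e.\ $K_n\le 2^{np}$ (in particular $K_n<\infty$, and $K_0\le1$). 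Letting $v_n:=\sum_k v(t_k^n)\chi_{[t_k^n,t_{k+1}^n)}$ one has $\|v-v_n\|_{L^\infty(\R;L_x^2)}\le 2^{-n}$ by construction, hence $v=v_0+\sum_{n\ge1}(v_n-v_{n-1})$ with convergence in $L^\infty(\R;L_x^2)$. Now $w_n:=v_n-v_{n-1}$ is a step function on the common refinement of the two partitions, so it has $\lesssim 2^{np}$ steps, each of $L_x^2$-size $\le\|v-v_n\|_{L^\infty}+\|v-v_{n-1}\|_{L^\infty}\lesssim 2^{-n}$; writing $w_n=\lambda_n b_n$ with $b_n$ the normalized $U^q$-atom gives $\lambda_n^q=\sum(\text{step sizes})^q\lesssim 2^{np}(2^{-n})^q=2^{n(p-q)}$, and similarly $v_0=\lambda_0 b_0$ with $\lambda_0\lesssim1$. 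Since $p<q$, the exponent $(p-q)/q$ is negative, so $\sum_{n\ge0}|\lambda_n|\lesssim_{p,q}1$, whence $v=\sum_n\lambda_n b_n$ is an admissible atomic representation and $\|v\|_{U^q}\lesssim_{p,q}\|v\|_{V^p}$. I expect the main obstacle to lie precisely here: making the greedy partition and the count $K_n\le 2^{np}$ rigorous, together with the bookkeeping around right-continuity, $\inf\emptyset=\infty$, and the behaviour at $\pm\infty$; the rest is soft functional analysis.
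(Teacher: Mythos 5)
The paper does not prove this lemma at all---it simply cites Proposition 2.2 and Corollary 2.6 of Hadac--Herr--Koch---and your reconstruction is precisely the standard argument behind those citations, including the key stopping-time/greedy-partition decomposition with the count $K_n\le 2^{np}$ that yields $V^p\hookrightarrow U^q$ for $p<q$. Your proof is correct (the points you flag as delicate---right-continuity forcing $\|v(t_{k+1}^n)-v(t_k^n)\|\ge 2^{-n}$ at finite stopping times, the use of $v(t)\to 0$ as $t\to-\infty$ for the first increment, and the finiteness of $K_n$---are all handled correctly), so nothing further is needed beyond the citation the paper already gives.
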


\begin{lemma}[Corollary 4.24. of \cite{kotavi-book}]\label{lem:duality} For $1 < p < \infty$, if $u \in U^p$ is absolutely continuous, then
	$$
	\|u\|_{U^p} = \sup \left\{ \left| \int \left<u'(t), v(t) \right>_{L_x^2} dt \right| : v \in C_0^{\infty}, \;\; \|v\|_{V^{p'}}=1 \right\}.
	$$
\end{lemma}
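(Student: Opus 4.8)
The plan is to deduce the identity from the fundamental duality $(U^p)^\ast \cong V^{p'}$, realized through the bilinear form that pairs the increments of a $U^p$ function against a $V^{p'}$ function, and then to rewrite that pairing as the integral $\int \langle u'(t), v(t)\rangle\, dt$ precisely when $u$ is absolutely continuous. First I would set up the form: for a partition $\{t_0 < \cdots < t_K\} \in \mathcal{P}$ and maps $u, v : \R \to L_x^2$, define
\[
B(u,v) = \sum_{k=1}^K \bigl\langle u(t_k) - u(t_{k-1}),\, v(t_{k-1}) \bigr\rangle_{L_x^2}.
\]
Testing against a single $U^p$-atom, applying Hölder over the partition index, and invoking the definition of the $V^{p'}$-norm yields the core estimate $|B(u,v)| \le \|u\|_{U^p}\,\|v\|_{V^{p'}}$; passing to common refinements shows $B$ is well defined and extends to a bounded bilinear form on $U^p \times V^{p'}$. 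In particular, for every $v$ with $\|v\|_{V^{p'}} = 1$ we get $|B(u,v)| \le \|u\|_{U^p}$, which is the easy ``$\ge$'' half of the claimed supremum once $B$ is matched with the integral.

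Next I would invoke (or reprove by Hahn–Banach) the duality theorem: the map $v \mapsto B(\cdot, v)$ is an isometric isomorphism $V^{p'} \to (U^p)^\ast$. The surjectivity is the substantive half: given $L \in (U^p)^\ast$, one restricts $L$ to step functions, uses a Riesz-type representation slicewise to produce a candidate $v$, and checks $\|v\|_{V^{p'}} = \|L\|$, where right-continuity is forced in order to pin down the representative uniquely. The embeddings $V^{p'} \hookrightarrow U^q \hookrightarrow L^\infty(\R; L_x^2)$ from Lemma~\ref{lem:embedd} guarantee that all these objects are genuine $L_x^2$-valued functions. Combined with the elementary fact that $\|u\|_{U^p} = \sup\{|L(u)| : \|L\|_{(U^p)^\ast} \le 1\}$, the isometry gives
\[
\|u\|_{U^p} = \sup\bigl\{ |B(u,v)| : \|v\|_{V^{p'}} = 1 \bigr\}.
\]

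It remains to identify $B(u,v)$ with the integral and to cut the test class down to $C_0^\infty$. For $u$ absolutely continuous and $v$ continuous of bounded variation (in particular any $v \in C_0^\infty$), the left Riemann–Stieltjes sums $B(u,v)$ converge, as the mesh tends to zero, to $\int \langle v(t),\, du(t)\rangle = \int \langle u'(t),\, v(t)\rangle\, dt$, the last equality being exactly where absolute continuity of $u$ is used; since $v$ has compact support only finitely much of the time axis contributes. Thus $B(u,v) = \int \langle u'(t), v(t)\rangle\, dt$ for all $v \in C_0^\infty$. To replace the supremum over the full unit ball of $V^{p'}$ by one over $C_0^\infty$, I would take a near-optimal $v$ and approximate it by smooth compactly supported $v_n$ obtained by time-truncation followed by mollification, verifying that these operations do not inflate the $V^{p'}$-norm and that $B(u,v_n) \to B(u,v)$; this density step leans on the right-continuity and the vanishing-at-$-\infty$ conditions baked into the definition of $V^{p'}$.

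The main obstacle is the surjectivity in the duality theorem: constructing, for an arbitrary $L \in (U^p)^\ast$, a representing $v \in V^{p'}$ with matching norm, which is where the atomic structure of $U^p$ must be dualized against the $p'$-variation structure of $V^{p'}$ and where right-continuity is genuinely needed. A secondary but delicate point is the $C_0^\infty$ density: the $p'$-variation seminorm is not continuous under crude truncation, so the cutoffs in time must be chosen smooth and of controlled variation to ensure $\|v_n\|_{V^{p'}}$ stays within $1+o(1)$ of $\|v\|_{V^{p'}}$ while the pairing is preserved in the limit.
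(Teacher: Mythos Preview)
The paper does not give its own proof of this lemma: it is stated as a direct citation of Corollary~4.24 in Koch--Tataru--Visan, with no argument supplied. There is therefore nothing in the paper to compare your proposal against.

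That said, your outline follows the standard route taken in the cited reference: establish the bilinear pairing $B$, prove the isometric duality $(U^p)^\ast \cong V^{p'}$ via $B$, and then specialize to absolutely continuous $u$ to rewrite $B(u,v)$ as $\int \langle u'(t), v(t)\rangle\,dt$. You have correctly identified the two genuine difficulties. The surjectivity of $v \mapsto B(\cdot,v)$ is indeed the heart of the matter and is handled in \cite{haheko2009, kotavi-book} by testing against indicator functions to build $v$ pointwise. The $C_0^\infty$ reduction is more subtle than your sketch suggests: mollification in $t$ does \emph{not} generally control the $V^{p'}$ norm, and $C_0^\infty$ is not norm-dense in $V^{p'}$. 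The correct mechanism is rather that step functions (which are dense in $U^p$ in a useful sense, being built from atoms) can be paired against $v$ exactly, and for absolutely continuous $u$ one approximates $u$ itself by smooth truncations while keeping $v$ general; alternatively, one shows that for fixed absolutely continuous $u$ the functional $v \mapsto \int \langle u', v\rangle\,dt$ is weak-$\ast$ continuous on bounded sets in $V^{p'}$, where smooth compactly supported $v$ are weak-$\ast$ dense. Your plan to mollify $v$ and hope the $V^{p'}$ norm stays bounded would not go through as written.
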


\begin{lemma}\label{lem:limit}
	For $1\le p <\infty$, if $v\in V^p$, then the limits  of  $v(t)$ in $L_x^2(\R^d)$ exist as $t\to \pm\infty$.
\end{lemma}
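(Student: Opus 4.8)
The plan is to handle the two ends of the real line separately. As $t\to-\infty$ there is nothing to prove: by the very definition of the space $V^p$ adopted here, every $v\in V^p$ satisfies $v(t)\to 0$ in $L_x^2(\R^d)$. So the substance of the lemma is the existence of $\lim_{t\to+\infty}v(t)$ in $L_x^2(\R^d)$; since $L_x^2(\R^d)$ is a Banach space, it is enough to verify the Cauchy criterion, namely that for every $\ve>0$ there is $T\in\R$ with $\normo{v(s)-v(t)}_{L_x^2}<\ve$ for all $s,t>T$.

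I would prove this by contradiction. If it failed, there would be $\ve_0>0$ so that for every $T$ one can pick $s<t$ with $s,t>T$ and $\normo{v(t)-v(s)}_{L_x^2}\ge\ve_0$. Applying this repeatedly — fix $\tau_1$, choose $\tau_2<\tau_3$ above $\tau_1$ with $\normo{v(\tau_3)-v(\tau_2)}_{L_x^2}\ge\ve_0$, then $\tau_4<\tau_5$ above $\tau_3$ with $\normo{v(\tau_5)-v(\tau_4)}_{L_x^2}\ge\ve_0$, and so on — yields a strictly increasing sequence $\tau_1<\tau_2<\cdots$ with $\normo{v(\tau_{2k+1})-v(\tau_{2k})}_{L_x^2}\ge\ve_0$ for all $k\ge1$. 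For each $K\in\N$ the finite set $\{\tau_1,\dots,\tau_{2K+1}\}$ is an admissible partition (an element of $\mathcal{P}$), and discarding the increments of odd lower index (each $\ge0$) gives
\[
\sum_{j=2}^{2K+1}\normo{v(\tau_j)-v(\tau_{j-1})}_{L_x^2}^p\ \ge\ \sum_{k=1}^{K}\normo{v(\tau_{2k+1})-v(\tau_{2k})}_{L_x^2}^p\ \ge\ K\ve_0^p,
\]
so that $\norm{v}_{V^p}\ge K^{1/p}\ve_0$ for every $K$, contradicting $\norm{v}_{V^p}<\infty$. Hence $v(t)$ is Cauchy as $t\to+\infty$ and therefore converges in $L_x^2(\R^d)$. (Run toward $-\infty$, the same computation shows that finiteness of $\norm{v}_{V^p}$ by itself already forces a limit there; the condition $v(-\infty)=0$ in the definition only pins down its value.)

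An alternative, more structural route uses the embedding $V^p\hookrightarrow U^q$ for any $q\in(p,\infty)$ from Lemma \ref{lem:embedd}: one writes $v=\sum_j\lam_j a_j$ as an $L^\infty(\R;L_x^2)$-convergent sum of $\lam_j$ times $U^q$-atoms $a_j$ with $\sum_j|\lam_j|<\infty$; each atom is a step function over a finite partition, hence constant near $+\infty$ and near $-\infty$, so the partial sums have limits in $L_x^2$ at $\pm\infty$, and these are Cauchy in the number of terms because the series converges uniformly in $t$; an $\ve/3$ argument then identifies their limits with $\lim_{t\to\pm\infty}v(t)$. I do not anticipate any real obstacle here: this is a soft completeness statement. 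The only points that require a little attention are keeping the extracted sequence $(\tau_k)$ strictly increasing so that $\{\tau_1,\dots,\tau_{2K+1}\}$ is a genuine partition in $\mathcal{P}$, and, in the alternative approach, the legitimacy of exchanging the limit in the number of terms with the limit $t\to\pm\infty$, which is precisely what the uniform convergence supplies.
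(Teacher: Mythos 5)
Your main argument is correct and is essentially the contrapositive of the paper's own proof: the paper takes a partition that nearly attains $\|v\|_{V^p}$ and notes that appending two points beyond its last node can increase the sum by at most $\veps$, whereas you accumulate infinitely many $\ve_0$-jumps into ever larger partitions to contradict finiteness of the norm --- both rest on the same exploitation of finite partitions, and yours is complete as written. Your remarks that the $t\to-\infty$ limit is immediate from the definition, and the sketched alternative via the atomic decomposition in $U^q$, are also sound.
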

\begin{proof}
	Without loss of generality, we consider the limit of $v(t)$ as $t\to \infty$ when $\|v\|_{V^p}=1$. Given $0<\veps <1$, there exists a partition $\{t_k\}_{k=1}^K\in \mathcal{P}$ with $t_K<\infty$ such that
	\[
	\big(\|v\|_{V^p}-\veps\big)^{\f{1}{p}}< \left(\sum _{k=1}^K \|v(t_k)-v(t_{k-1})\|^p_{L_x^2}\right)^{\f{1}{p}} \leq \|v\|_{V^p}=1
	\]
	which gives
	\[
	\|v\|_{V^p} < \sum _{k=1}^K \|v(t_k)-v(t_{k-1})\|^p_{L_x^2}+\veps.
	\]
	Then for any $t, t'$ with $t>t'>t_K$, we have $\{t_k\}_{k=1}^K \cap \{ t',t\} \in \mathcal P$ and this implies that
	\[
	\begin{aligned}
		& \sum _{k=1}^K \|v(t_k)-v(t_{k-1})\|^p_{L_x^2} + \|v(t')-v(t_K)\|^p_{L_x^2} + \|v(t)-v(t')\|^p_{L_x^2}  \\
		&\leq  \|v\|_{V^p}\leq
		\sum _{k=1}^K \|v(t_k)-v(t_{k-1})\|^p_{L_x^2}+\veps,
	\end{aligned}
	\]
	and therefore $\|v(t)-v(t')\|_{L_x^2}\leq \veps^{\f{1}{p}}$.
\end{proof}

Motivated by the Duhamel formula \eqref{eq:duhamel}, we define the adapted function spaces $U_{\De}^p$ and $V_{\De}^p$ for $1\le p <\infty$ by
$$
U_{\Delta}^p : = \left\{ e^{\dav i t\Delta}u :  u \in U^p \right\} \;\;\mbox{and}\;\;
V_{\Delta}^p : = \left\{e^{\dav i t \Delta} v :  v \in V^p \right\}
$$
with norms
$$
\|u\|_{U_{\Delta}^p} : =\|e^{- \dav i t \Delta} u\|_{U^p} \;\;\mbox{and}\;\;  \|v\|_{V_{\Delta}^p} : =\|e^{- \dav i t\Delta} v\|_{V^p}.
$$
Then, it follows from Lemma \ref{lem:embedd} that $U_{\De}^p$ and $V_{\De}^p$ are Banach spaces and that the embedding property holds, i.e.,  $U_{\De}^p \hookrightarrow V_{\De}^p \hookrightarrow U_{\De}^q \hookrightarrow L^{\infty}(\mathbb{R};L_x^2)$ are continuous for all $1 \le p < q < \infty$.
Moreover, it is obvious to see that the $V^p$ norm is preserved under the unitary operator $e^{ir\Delta}$ for any $r\in\R$ in $L_x^2(\R^d)$ and so is the $V_\De^p$ norm, that is,
\begin{equation}\label{eq:persistence-vp}
	\left\|e^{ir\Delta} v\right\|_{V_\De^p} = \|v\|_{V_\De^p}
\end{equation}
for all $v\in V_\De^p$ and $r\in\R$. The $U_\De^p$ norm is, also, preserved under $e^{ir\Delta}$, i.e.,
\begin{equation}\label{eq:persistence-up}
	\|e^{ir\Delta} u\|_{U_\De^p} = \|u\|_{U_\De^p}
\end{equation}
for all $u \in \up$ and $r\in\R$. Indeed, for any $U^p$-atom $a$, it is clear that $e^{ir\Delta}a$ is a $U^p$-atom since $e^{ir\Delta}$ is a unitary operator in $L_x^2(\R^d)$. Thus, for all $r\in\R$,  $\|e^{ir\Delta} u\|_{U^p} = \|u\|_{U^p}$ and therefore $\|e^{ir\Delta} u\|_{U_\De^p} = \|u\|_{U_\De^p}$.
We close this section by presenting two lemmas adapted from \cite[Lemma 4.12]{kotavi-book} and \cite[Proposition 2.19]{haheko2009}.
\begin{lemma}[Logarithmic interpolation]\label{lem:log-inter} Let $p > 2$. Then there is a constant $\kappa=\kappa(p) > 0$ such that for all $v \in \vtwo$ and  $M \ge 1$ there exist $u \in \utwo$ and $w \in U_{\De}^p$ with
	$$
	v = u +w
	$$
	for which
	$$
	\frac{\kappa}{M} \|u\|_{\utwo} + e^M\|w\|_{U_{\De}^p} \le \|v\|_{\vtwo}.
	$$
\end{lemma}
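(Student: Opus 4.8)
The plan is to pass to the untwisted spaces and then produce the splitting $v=u+w$ by a dyadic stopping-time approximation of $v$. Since $u\mapsto e^{-\dav i t\De}u$ is, by the very definition of the adapted norms, an isometric linear bijection $U_\De^2\to U^2$ (and likewise $U_\De^p\to U^p$, $V_\De^2\to V^2$) which respects sums, it suffices to prove the untwisted statement: for every $v\in V^2$ with $\|v\|_{V^2}=1$ and every $M\ge1$ there exist $u\in U^2$, $w\in U^p$ with $v=u+w$, $\|u\|_{U^2}\le A_p M$ and $\|w\|_{U^p}\le\tfrac12 e^{-M}$, where $A_p$ depends only on $p$. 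Granting this, set $\kappa:=1/(2A_p)$; then $\tfrac{\kappa}{M}\|u\|_{U^2}+e^M\|w\|_{U^p}\le\tfrac12+\tfrac12=1=\|v\|_{V^2}$, and homogeneity removes the normalization.

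So fix $v\in V^2$ with $\|v\|_{V^2}=1$; recall that $v$ is right-continuous with $v(t)\to0$ as $t\to-\infty$, and note that testing the $V^2$-norm against the partition $\{-\infty,t,\infty\}$ gives $\sqrt2\,\|v(t)\|_{L_x^2}\le1$ for every $t$. For each integer $j\ge0$ I would define stopping times $\tau_0^j:=-\infty$ and
\[
\tau_{k+1}^j:=\inf\bigl\{\,t>\tau_k^j:\ \|v(t)-v(\tau_k^j)\|_{L_x^2}\ge 2^{-j}\,\bigr\}\in(\tau_k^j,\infty],
\]
and set $v_j(t):=v(\tau_k^j)$ on $[\tau_k^j,\tau_{k+1}^j)$ (with $v(\infty):=0$). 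The key bookkeeping facts are: right-continuity together with the definition of the infimum forces $\|v(\tau_{k+1}^j)-v(\tau_k^j)\|_{L_x^2}\ge 2^{-j}$ whenever $\tau_{k+1}^j<\infty$, so from $\sum_k\|v(\tau_{k+1}^j)-v(\tau_k^j)\|_{L_x^2}^2\le\|v\|_{V^2}^2=1$ we get that only finitely many $\tau_k^j$ are finite and that $v_j$ has at most $1+4^j$ constancy intervals; by construction $\|v-v_j\|_{L^\infty(\R;L_x^2)}\le 2^{-j}$; and $v_0\equiv0$ because $\|v(t)\|_{L_x^2}<1$. Being right-continuous, eventually constant, and supported away from $t=-\infty$, $v_j$ is a finite linear combination of $U^q$-atoms for every $q$, and $v_j\to v$ in $L^\infty(\R;L_x^2)$, so telescoping with $v_0\equiv0$ gives $v=\sum_{j\ge0}(v_{j+1}-v_j)$ with partial sums $v_{J+1}$.

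Next I would estimate the increments. The step function $v_{j+1}-v_j$ has its breakpoints among those of $v_j$ and $v_{j+1}$, hence $\lesssim 4^j$ constancy intervals, and on each of them its value has $L_x^2$-norm $\le\|v-v_j\|_{L^\infty(\R;L_x^2)}+\|v-v_{j+1}\|_{L^\infty(\R;L_x^2)}\le 2^{1-j}$. Since any step function equals $\bigl(\sum_{\mathrm{intervals}}\|\cdot\|_{L_x^2}^q\bigr)^{1/q}$ times a single $U^q$-atom, taking $q=p$ and $q=2$ gives
\[
\|v_{j+1}-v_j\|_{U^p}\lesssim\bigl(4^j 2^{-jp}\bigr)^{1/p}=2^{-j(1-2/p)},\qquad \|v_{j+1}-v_j\|_{U^2}\lesssim\bigl(4^j 2^{-2j}\bigr)^{1/2}=1.
\]
Since $p>2$, the first bound is summable, so the telescoping series converges in $U^p$; because $U^p\hookrightarrow L^\infty(\R;L_x^2)$ and $v_{J+1}\to v$ in the latter, the limit is $v$.

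Finally, given $M\ge1$, I would choose $J=J(p,M)\in\N$ large enough that $\sum_{j>J}\|v_{j+1}-v_j\|_{U^p}\le\tfrac12 e^{-M}$; by the previous bound and $1-2/p>0$, summing the geometric series shows this holds once $J\ge c_p(M+1)$ for a suitable $c_p$, hence with $J\le A_p M$ since $M\ge1$. Put $u:=\sum_{j=0}^{J}(v_{j+1}-v_j)=v_{J+1}\in U^2$ and $w:=\sum_{j>J}(v_{j+1}-v_j)=v-v_{J+1}\in U^p$; then $v=u+w$, $\|u\|_{U^2}\le\sum_{j=0}^{J}\|v_{j+1}-v_j\|_{U^2}\lesssim J\le A_p M$ (after enlarging $A_p$), and $\|w\|_{U^p}\le\tfrac12 e^{-M}$, which is the reduced claim. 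I expect the main obstacle to be the bookkeeping in the second paragraph: verifying that each jump has size $\ge2^{-j}$ (this uses right-continuity and the structure of the infimum), deducing the $4^j$ bound on the number of constancy intervals of $v_j$ and hence of $v_{j+1}-v_j$, and checking that the telescoping series reconverges to $v$ in $U^p$; the remainder is geometric summation and a choice of constants.
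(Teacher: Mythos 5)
The paper does not actually prove this lemma; it imports it from \cite{haheko2009} (Proposition~2.19) and \cite{kotavi-book} (Lemma~4.12). Your argument is correct and is essentially the standard stopping-time proof given in those references: the right-continuous step-function approximants $v_j$ with jump threshold $2^{-j}$, the bound of at most $4^j$ jumps coming from the $V^2$ norm, the telescoping series with increment bounds $O(1)$ in $U^2$ and $O(2^{-j(1-2/p)})$ in $U^p$, and the cut-off at $J\approx c_pM$, all match the cited proof (the only cosmetic point being that $-\infty$ is not a legal partition point, which is handled by $v(t)\to 0$ as $t\to-\infty$ exactly as you use it).
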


\begin{lemma}[Transference principle]\label{lem:transfer}
 If a $k$-linear operator
	$
	T_0 : L_x^2(\R^d) \times \cdots \times L_x^2(\R^d) \to L_{loc}^1(\R^d)
	$
	satisfies
	$$
	\left\|T_0\left( e^{\dav it\De}f_1 , e^{\dav it\De}f_2 , \cdots ,e^{\dav  it\De}f_k \right)\right\|_{L_t^qL_x^r} \les \prod_{j=1}^{k}\|f_j\|_{L_x^2}
	$$
	for some $1 \le q,r \le \infty$, then there exists $T : U^p_\De \times \cdots \times U^p_\De \to L_t^qL_x^r$ satisfying
	$$
	\|T(u_1, u_2,\cdots, u_k)\|_{L_t^q L_x^r} \les \prod_{j=1}^k \|u_j\|_{U_{\De}^q}
	$$
such that
	$T(u_1,\cdots, u_k)(t,\cdot)=T_0(u_1(t),\cdots,u_k(t))$ a.e. in $\R^d$.
\end{lemma}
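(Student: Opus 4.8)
The plan is to follow the standard transference argument of Hadac--Herr--Koch, adapting it from $e^{it\De}$ to the propagator $e^{\dav it\De}$. Since every $u_j\in U^q_\De$ lies in $L^\infty(\R;L_x^2)$ by Lemma~\ref{lem:embedd}, the value $u_j(t)\in L_x^2(\R^d)$ is well defined for every $t$, so I would simply \emph{define} $T$ pointwise in time by
\[
T(u_1,\dots,u_k)(t,\cdot):=T_0\big(u_1(t),\dots,u_k(t)\big),
\]
which is an element of $L_{loc}^1(\R^d)$ for a.e.\ $t$ and makes the asserted a.e.\ identity automatic; the entire content of the lemma is then the quantitative bound. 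Conjugating by $e^{-\dav it\De}$ (which preserves the $U^q_\De$ norm by definition and, being fibrewise unitary, the pointwise $L_x^2$ norms), I may work with plain $U^q$; restricting to a subinterval $I\subset\R$ only shrinks the $L^q_t$ domain, so it suffices to take $I=\R$; and since $U^p_\De\hookrightarrow U^q_\De$ for $p\le q$ by Lemma~\ref{lem:embedd}, it is enough to estimate inputs in $U^q_\De$.

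First I would reduce, by $k$-linearity of $T_0$ together with the atomic decomposition, to the case that each $u_j$ is the free evolution of a single $U^q$-atom: writing $e^{-\dav it\De}u_j=\sum_{l}\lam_j^{(l)}a_j^{(l)}$ with $U^q$-atoms $a_j^{(l)}$ and $\sum_l|\lam_j^{(l)}|\le 2\|u_j\|_{U^q_\De}$, and expanding multilinearly, the claim follows once $\|T(e^{\dav it\De}a_1,\dots,e^{\dav it\De}a_k)\|_{L^q_tL^r_x}\lesssim 1$ is known for atoms $a_1,\dots,a_k$. For that, write $a_j=\sum_{n=1}^{K_j}\phi_{j,n}\chi_{[t_{j,n-1},t_{j,n})}$ with $\sum_n\|\phi_{j,n}\|_{L_x^2}^q=1$, let $\{s_0<\dots<s_M\}$ be the common refinement of the underlying partitions, and observe that on each interval $(s_{m-1},s_m)$ one has $e^{\dav it\De}a_j(t)=e^{\dav it\De}\phi_{j,\kappa_j(m)}$ for a well-defined index $\kappa_j(m)$ (or $e^{\dav it\De}a_j\equiv 0$ there, in which case that interval contributes nothing by multilinearity). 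Hence
\[
\|T(e^{\dav it\De}a_1,\dots,e^{\dav it\De}a_k)\|_{L^q_tL^r_x}^q=\sum_{m=1}^M\big\|T_0\big(e^{\dav it\De}\phi_{1,\kappa_1(m)},\dots,e^{\dav it\De}\phi_{k,\kappa_k(m)}\big)\big\|_{L^q_t((s_{m-1},s_m);L^r_x)}^q,
\]
and each summand is $\le\big\|T_0(e^{\dav it\De}\phi_{1,\kappa_1(m)},\dots,e^{\dav it\De}\phi_{k,\kappa_k(m)})\big\|_{L^q_t(\R;L^r_x)}^q\lesssim\prod_{j=1}^k\|\phi_{j,\kappa_j(m)}\|_{L_x^2}^q$ by the hypothesis on $T_0$. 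The key combinatorial point is that $m\mapsto(\kappa_1(m),\dots,\kappa_k(m))$ is injective, since a refinement interval is uniquely determined by the tuple of original subintervals containing it; therefore
\[
\sum_{m=1}^M\prod_{j=1}^k\|\phi_{j,\kappa_j(m)}\|_{L_x^2}^q\le\prod_{j=1}^k\Big(\sum_{n=1}^{K_j}\|\phi_{j,n}\|_{L_x^2}^q\Big)=1.
\]
Feeding this back through the multilinear expansion yields $\|T(u_1,\dots,u_k)\|_{L^q_tL^r_x}\lesssim\prod_{j=1}^k\sum_{l_j}|\lam_j^{(l_j)}|\lesssim\prod_{j=1}^k\|u_j\|_{U^q_\De}$.

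The step needing the most care is the passage from finite to infinite atomic decompositions in the multilinear expansion: since $T_0$ is only assumed to land in $L_{loc}^1(\R^d)$, I would first run the whole argument for inputs $u_j$ built from \emph{finitely many} atoms, where distributing the finite sums across $T_0$ is a genuine identity coming from $k$-linearity, and then obtain the general statement by taking the infimum over such finite decompositions — which computes $\|u_j\|_{U^q}$ up to an arbitrarily small error — together with the uniform bound just established, extending $T$ by continuity. (Joint measurability of $(t,x)\mapsto T(u_1,\dots,u_k)(t,x)$ is clear for step-function inputs and is inherited in the limit.) All of this is exactly \cite[Proposition~2.19]{haheko2009} with $e^{it\De}$ replaced by $e^{\dav it\De}$, so in the write-up I would record the definition of $T$ and the combinatorial estimate above and refer to \cite{haheko2009} for the remaining routine verifications.
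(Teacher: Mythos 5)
Your proof is correct and follows exactly the route the paper intends: the paper does not prove this lemma but simply cites \cite[Proposition 2.19]{haheko2009}, and your argument (atomic decomposition, common refinement of partitions, injectivity of the index map, H\"older-free summation to $\prod_j\sum_n\|\phi_{j,n}\|_{L_x^2}^q=1$) is precisely the proof of that cited proposition, transplanted from $e^{it\De}$ to $e^{\dav it\De}$. No gaps.
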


\section{Bilinear estimates}\label{sec:bilinear}
In this section, we provide the essential tools for establishing the proof of our main theorem.
We begin with invoking the standard Strichartz estimate, well-known in arbitrary dimensions. However, we need its one and two dimensional versions only, as stated below.

\begin{lemma}[Strichartz estimate]\label{lem:stri}
	If $(q,r)$ satisfies $\frac2q   = d \left(\frac12 - \frac1r \right)$ and $(d,q,r) \neq (2,2,\infty)$, then
	\begin{align}\label{eq:stri}
		\|e^{it\Delta} f\|_{L_t^q L_x^r} \les \|f\|_{L_x^2}
	\end{align}
for all $f \in L_x^2(\R^d)$.
\end{lemma}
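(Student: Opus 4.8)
The plan is to prove this by the standard dispersive-estimate-plus-duality argument; it is the classical non-endpoint Strichartz estimate. First I would record the two elementary ingredients: the mass conservation $\|e^{it\De}f\|_{L_x^2}=\|f\|_{L_x^2}$ and the dispersive bound $\|e^{it\De}f\|_{L_x^\infty}\les |t|^{-d/2}\|f\|_{L_x^1}$ for $t\ne 0$, the latter coming from the explicit Schr\"odinger kernel $(4\pi it)^{-d/2}e^{i|x|^2/(4t)}$. Riesz--Thorin interpolation between these two then yields, for every $2\le r\le\infty$,
\[
\|e^{it\De}f\|_{L_x^r}\les |t|^{-d\left(\frac12-\frac1r\right)}\|f\|_{L_x^{r'}},\qquad t\ne 0 .
\]

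The case $q=\infty$ (equivalently $r=2$) is just mass conservation, so I may assume $2<q<\infty$. For $d=1,2$ and $(d,q,r)\ne(2,2,\infty)$ the scaling identity forces $q>2$, hence
\[
\lambda:=\tfrac2q=d\bigl(\tfrac12-\tfrac1r\bigr)\in(0,1);
\]
this is the one point where the dimensional restriction and the exclusion of $(2,2,\infty)$ are used, and it is precisely what makes the one-dimensional Hardy--Littlewood--Sobolev inequality available below. In particular the genuine endpoint $\lambda=1$, i.e.\ $q=2$, which occurs only in dimension $d\ge3$, never arises here, so no Keel--Tao type endpoint argument is needed.

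Next, by a $TT^*$/duality reduction it suffices to establish the dual inequality
\[
\Bigl\|\int_{\R}e^{-it\De}F(t)\,dt\Bigr\|_{L_x^2}\les\|F\|_{L_t^{q'}L_x^{r'}}
\]
for, say, $F$ Schwartz in $(t,x)$, the general case following by density; the estimate \eqref{eq:stri} is then recovered by pairing $e^{it\De}f$ against such $F$ and using the self-adjointness of $e^{it\De}$. To prove the dual inequality I would square the left-hand side and use Fubini to write it as $\int\!\!\int_{\R^2}\langle e^{i(s-t)\De}F(s),F(t)\rangle_{L_x^2}\,ds\,dt$, bound the integrand by H\"older together with the interpolated dispersive estimate as $\les|s-t|^{-\lambda}\|F(s)\|_{L_x^{r'}}\|F(t)\|_{L_x^{r'}}$, and finally apply the one-dimensional Hardy--Littlewood--Sobolev inequality in the variables $s,t$ with exponents $(q',q')$ and kernel exponent $\lambda$ --- valid exactly because $\tfrac1{q'}+\tfrac1{q'}+\lambda=2$ and $0<\lambda<1$ --- to bound everything by $\|F\|_{L_t^{q'}L_x^{r'}}^2$.

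I do not expect a substantive obstacle here, since the statement is textbook. The only points requiring care are the bookkeeping verification that for $d=1,2$, once $(2,2,\infty)$ is discarded, one indeed has $0<\lambda<1$ so that the non-endpoint Hardy--Littlewood--Sobolev inequality applies, and the routine density/limiting argument that makes the duality pairing and the use of Fubini rigorous.
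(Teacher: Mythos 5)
Your argument is correct and complete: the dispersive estimate, Riesz--Thorin interpolation, the $TT^*$ reduction, and the one-dimensional Hardy--Littlewood--Sobolev inequality with $\lambda=2/q\in(0,1)$ constitute the classical Ginibre--Velo proof of the non-endpoint Strichartz estimate, and your bookkeeping correctly shows that for $d=1,2$ with $(2,2,\infty)$ excluded the admissible pairs never reach the endpoint $q=2$, so no Keel--Tao argument is needed. The paper itself offers no proof of this lemma --- it simply invokes the estimate as standard --- so your write-up supplies exactly the textbook argument the authors are implicitly relying on.
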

To estimate the nonlinear part of Duhamel's formula \eqref{eq:duhamel}, we need the following bilinear Strichartz estimate
	\begin{equation}\label{eq:bilinear1}
		\|e^{ \dav it\Delta}f\, e^{\dav it \Delta}  g\|_{L_{t,x}^{1+2/d}} \les  \|f\|_{L_x^2}\| g\|_{L_x^2}
	\end{equation}
for all $f,g \in L_x^2(\R^d)$, which follows from the Cauchy-Schwarz inequality and the Strichartz estimate with $q=r$.
To account for high-low frequency interactions, we establish additional bilinear estimate.
Its proof is quite standard, but we provide the proof for the reader's convenience. The bilinear estimate in $L^2_{t,x}(\R \times \R^2)$ is well-known, as seen in \cite{bourgain}, and an analogous bilinear estimate in $L^2_{t,x}(\R \times \R^d)$ holds for any dimension, see, e.g., \cite[Theorem 2.9]{kotavi-book}.

\begin{lemma}[Bilinear Strichartz estimate]\label{lem:bilinear}
	If  $N_1 \nsim N_2$, then
	\begin{equation}\label{eq:bilinear}
		\normo{e^{\dav it \Delta} P_{N_1}f \, e^{\dav it \Delta} P_{N_2} g}_{L_{t,x}^{1+2/d}} \les \left( \frac{N_{12}^{\min}}{N_{12}^{\max}}\right)^{\beta(d)}  \| P_{N_1}f\|_{L_x^2}\| P_{N_2}g\|_{L_x^2}
	\end{equation}
for all $f, g \in L_x^2(\R^d)$, where $\beta(1)=1/6$ and $\beta(2)=1/2$.
\end{lemma}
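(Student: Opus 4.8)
The plan is to deduce \eqref{eq:bilinear} from the classical $L^2_{t,x}$ bilinear Strichartz estimate — which is already the assertion when $d=2$ — and to recover the one-dimensional case by interpolating that estimate against a crude $L^\infty_{t,x}$ bound. By the symmetry $f\leftrightarrow g$, $N_1\leftrightarrow N_2$ I may assume $N_1\ll N_2$, so that $N_{12}^{\min}=N_1$ and $N_{12}^{\max}=N_2$; and since $\dav\neq0$, the time rescaling $s=\dav t$ reduces matters to $\dav=1$ up to a harmless constant. Throughout write $F=e^{it\Delta}P_{N_1}f\, e^{it\Delta}P_{N_2}g$.

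The one input with content is
\[
\normo{F}_{L^2_{t,x}(\R\times\R^d)}\les \f{N_1^{(d-1)/2}}{N_2^{1/2}}\,\normo{P_{N_1}f}_{L_x^2}\normo{P_{N_2}g}_{L_x^2},
\]
which is Bourgain's estimate \cite{bourgain} for $d=2$ and holds in every dimension \cite[Theorem 2.9]{kotavi-book}. For the reader's convenience I would include its standard short proof: the space-time Fourier transform of $F$ is an integral over $\{\xi_1+\xi_2=\eta,\ |\xi_1|^2+|\xi_2|^2=-\tau\}$, and on $\supp\widehat{P_{N_1}f}\times\supp\widehat{P_{N_2}g}$ one has $|\xi_1|\lesssim N_1\ll N_2\sim|\xi_2|$, hence $|\xi_1-\xi_2|\sim N_2$, so the two relevant pieces of the paraboloid are transverse with quantitative separation $\sim N_2$ and the corresponding Jacobian/co-area factor is $\gtrsim N_2$. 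Plancherel in $(t,x)$, the resulting pointwise bound on the space-time Fourier transform of $F$, Cauchy--Schwarz, and a change of variables then give the display. For $d=2$ this is precisely \eqref{eq:bilinear}, since $1+\tfrac2d=2$ and $N_1^{1/2}/N_2^{1/2}=(N_1/N_2)^{\beta(2)}$.

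For $d=1$, where $1+\tfrac2d=3$, I would pair the $L^2_{t,x}$ bound above (with gain $N_2^{-1/2}$) with the elementary estimate
\[
\normo{F}_{L^\infty_{t,x}}\les (N_1 N_2)^{1/2}\normo{P_{N_1}f}_{L_x^2}\normo{P_{N_2}g}_{L_x^2},
\]
obtained from Hölder in $x$ together with Bernstein's inequality $\normo{e^{it\Delta}P_N h}_{L^\infty_x}\les N^{1/2}\normo{e^{it\Delta}P_N h}_{L^2_x}$ and mass conservation $\normo{e^{it\Delta}P_N h}_{L^2_x}=\normo{P_N h}_{L^2_x}$, applied to both factors. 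Interpolating by Hölder in $(t,x)$, $\normo{F}_{L^3_{t,x}}\le\normo{F}_{L^2_{t,x}}^{2/3}\normo{F}_{L^\infty_{t,x}}^{1/3}$, produces the factor $(N_2^{-1/2})^{2/3}\big((N_1N_2)^{1/2}\big)^{1/3}=N_2^{-1/3}(N_1N_2)^{1/6}=N_1^{1/6}N_2^{-1/6}=(N_1/N_2)^{\beta(1)}$, which is \eqref{eq:bilinear} for $d=1$. The only genuinely nontrivial step is the $L^2_{t,x}$ bilinear estimate, i.e. the transversality computation exploiting the frequency separation $N_1\nsim N_2$; everything afterwards is Hölder's inequality, Bernstein's inequality, and mass conservation.
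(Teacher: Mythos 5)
Your proof is correct, and for the one\mbox{-}dimensional case it takes a genuinely different route from the paper's. Both arguments rest on the same core input, the $L^2_{t,x}$ bilinear Strichartz estimate with gain $N_{12}^{\min\,(d-1)/2}/N_{12}^{\max\,1/2}$ coming from transversality of the two frequency\mbox{-}separated pieces of the paraboloid: for $d=2$ this \emph{is} the assertion, and there the two proofs essentially coincide (the paper writes out the delta\mbox{-}function/coarea computation bounding $\mathcal M(\tau,\xi)\les N_2/N_1$; you cite \cite[Theorem 2.9]{kotavi-book} and sketch the same computation). The divergence is in $d=1$, where the target norm is $L^3_{t,x}$ rather than $L^2_{t,x}$: the paper works directly on the space\mbox{-}time Fourier side, applying Hausdorff--Young from $L^{3/2}_{\tau,\xi}$ to $L^3_{t,x}$, changing variables with the Jacobian $|\xi_1-\xi_2|\gtrsim N_{12}^{\max}$, and then converting $\|\widehat{P_Nf}\|_{L^{3/2}}$ back to $\|P_Nf\|_{L^2}$ by H\"older on the dyadic support; you instead interpolate the $L^2_{t,x}$ bilinear bound (gain $N_2^{-1/2}$) against the trivial $L^\infty_{t,x}$ bound $(N_1N_2)^{1/2}$ from Bernstein and mass conservation, via $\|F\|_{L^3}\le\|F\|_{L^2}^{2/3}\|F\|_{L^\infty}^{1/3}$. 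Your exponent bookkeeping is right in both cases, the $L^\infty$ endpoint is legitimate since $e^{it\Delta}$ commutes with $P_N$ so Bernstein applies uniformly in $t$, and the reduction to $\dav=1$ by rescaling time is harmless. What each approach buys: yours is shorter and more modular once the $L^2$ bilinear estimate is taken as a black box, and the interpolation scheme would produce $L^p_{t,x}$ bounds for any $2\le p\le\infty$ with the corresponding exponents; the paper's Hausdorff--Young argument is self\mbox{-}contained at the $L^3$ level without invoking Bernstein, at the cost of being specific to the exponent $3$. Both yield the same exponent $1/6$, so nothing is lost either way.
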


\begin{proof}
Without loss of generality, we assume that $N_1 \gg N_2$. Note that the left side of \eqref{eq:bilinear} does not change when we replace $e^{\dav it \Delta} P_{N_2} g$ by its complex conjugate. First, we show that the space-time Fourier transform is
	\begin{align}\label{eq:fourier-side}
		\begin{aligned}
			&\mathcal F_{t,x}\left[e^{\dav it \Delta}f \, \overline{e^{\dav it \Delta}  g}  \right] (\tau,\xi) \\
			&= \frac{1}{(2\pi)^{d-1}} \int_{\R^d} \wh{f}(\xi + \eta) \;\overline{\wh{g}(\eta)}\; \de(\tau + \dav|\xi+\eta|^2 - \dav|\eta|^2)   d\eta.
		\end{aligned}
	\end{align}
 Indeed, using the formula of the linear propagator
	\[
	\left(e^{\dav it \Delta}\vp\right)(x)=\frac{1}{(2\pi)^d}\int_{\R^d} e^{ix\cdot \xi}e^{-\dav it|\xi|^2}\hatt{\vp}(\xi)d\xi,
	\]
	we first see that the space Fourier transform as
 \[
 \begin{aligned}
  & \mathcal F\left[e^{\dav it \Delta}f \, \overline{e^{\dav it \Delta}  g}  \right] (\xi)\\
  &=\frac{1}{(2\pi)^{2d}}  \int_{\R^{2d}}\left(\int_{\R^d} e^{-ix\cdot (\xi-\xi_1+\xi_2)} dx\right) \, e^{-\dav it\left(|\xi_1|^2-|\xi_2|^2\right)}\hatt{f}(\xi_1) \overline{\hatt{g}(\xi_2) }d\xi_1  d\xi_2 \\
  &= \frac{1}{(2\pi)^{d}}  \int_{\R^{2d}} \delta(\xi-\xi_1+\xi_2 )e^{-\dav it\left(|\xi_1|^2-|\xi_2|^2\right)}\hatt{f}(\xi_1) \overline{\hatt{g}(\xi_2) }d\xi_1  d\xi_2 \\
   &= \frac{1}{(2\pi)^{d}}  \int_{\R^{d}} e^{-\dav it\left(|\xi+\xi_2|^2-|\xi_2|^2\right)}\hatt{f}(\xi+\xi_2) \overline{\hatt{g}(\xi_2) } d\xi_2
 \end{aligned}
 \]
 where we used the Dirac delta function  $\delta(\xi)=\f{1}{(2\pi)^d} \int_{\R^d}e^{-i x \cdot \xi}dx$ as distribution.  Subsequently, applying the Fourier transform in time yields \eqref{eq:fourier-side} in a similar fashion.

\medskip
	Now we consider the one dimensional case. Since the Dirac delta function $\delta$ is homogeneous of degree $-1$, that is,
 \[
 \int_\R \vp(x)\delta(ax)dx =\int_\R \vp(y)\f{1}{|a|}\delta(y)dy,
 \]
 it follows from \eqref{eq:fourier-side}
	that 		\begin{align*}
		\mathcal F_{t,x}\left[e^{\dav it \Delta}f\, \overline{e^{\dav it \Delta}  g}  \right] (\tau,\xi)  =\frac1{2|\dav||\xi_1-\xi_2|} \wh{f}(\xi_1) \overline{\wh{g}(\xi_2)}
	\end{align*}
	where $(\xi_1,\xi_2)$ is the solution of
	\begin{align*}
		\dav (\xi_2^2 -  \xi_1^2) = \tau  \mbox{ and } \xi_1-\xi_2 = \xi.
	\end{align*}
	Thus, using the Hausdorff-Young inequality, we see that $|\xi_2-\xi_1|=|\xi| \gtrsim N_1$, and
	\[
	d\tau d\xi = 2|\dav| |\xi_1 - \xi_2|  d\xi_1 d\xi_2,
	\]
	we obtain
	\begin{align*}
&\normo{e^{\dav it  \Delta} P_{N_1}f\,\overline{e^{\dav it  \Delta} P_{N_2} g} }_{L_{t,x}^3}\\
	&	\les \, \normo{\mathcal F_{t,x}\left[e^{\dav it  \Delta} P_{N_1}f \,\overline{e^{\dav it  \Delta} P_{N_2} g}  \right] }_{L_{\tau,\xi}^\frac32}\\
		&\les \, \left( \iint_{\R\times \R} \frac{1}{|\xi_1-\xi_2|^{3/2}}\left|\wh{P_{N_1}f}(\xi_1)\right|^\frac32
		\left|\wh{P_{N_2}g}(\xi_2)\right|^\frac32 |\xi_1-\xi_2| d\xi_1 d\xi_2 \right)^\frac23\\
		&\les \, \frac{1}{N_1^{1/3}} \|\wh{P_{N_1}f}\|_{L_\xi^\frac32} \|\wh{P_{N_2}g}\|_{L_\xi^\frac32} \\
	&	\les \, \left(\frac{N_2}{N_1}\right)^\frac16 \|P_{N_1}f\|_{L_x^2} \|P_{N_2}g\|_{L_x^2}.
	\end{align*}
	The last inequality holds since
	\[
	\|\wh{P_{N}f}\|_{L_\xi^\frac32}^{\frac{3}{2}}  = \int_\R |\mrho{N} (\xi)|^{\frac32} | \wh{f} (\xi)|^{\frac32} d\xi
	\le \bigl(\mbox{supp} (\mrho{N})\bigr)^{\frac14} \left(\int_\R |\mrho{N} (\xi)|^2| \wh{f} (\xi)|^2 d\xi\right)^{\frac34}
	\les N^{\frac{1}{4}}\|P_Nf\|_{L_x^2}^\frac{3}{2}
	\]
	by H\"older's inequality.
	
	\medskip
	Let us move on to the two dimensional case. By the Plancherel identity and \eqref{eq:fourier-side}, we have
	\begin{align*}
		&\normo{e^{\dav it\De} P_{N_1}f \,\ol{e^{\dav it\De}P_{N_2}g} }_{L_{t,x}^2}^2\\
		&\les \normo{\mathcal F_{t,x}\left[e^{\dav it \Delta} P_{N_1}f \,\overline{e^{\dav it \Delta} P_{N_2} g}  \right]  }_{L_{\tau,\xi}^2}^2\\
		& \les \int_\R\int_{\R^2} \left|\int_{\R^2} \mrho{N_1}(\xi + \eta)\wh{f}(\xi + \eta) \;\mrho{N_2}(\eta)\overline{\wh{g}(\eta)}\; \de(\tau + \dav|\xi+\eta|^2 - \dav|\eta|^2)   d\eta \right|^2 d\xi d\tau.
	\end{align*}
	Setting $\tmrho{1}=\mrho{1}+\mrho{2}$ and
	$\tmrho{N}= \mrho{ N/2}+ \mrho{ N}+\mrho{ 2 N}$ for $N \ge 2$, then note $\mrho{N}=\mrho{N} \tmrho{N}$ and $(\tmrho{N})^2 \le \tmrho{N}$.
    Thus, using the Cauchy-Schwarz inequality in $\eta$ and the Plancherel identity, we obtain
	\begin{align*}
		&\normo{e^{\dav it\De} P_{N_1}f \,\overline{e^{\dav it\De}P_{N_2}g}}_{L_{t,x}^2}^2\\
  &\les  \normo{\mathcal M}_{L_{\tau,\xi}^\infty}\int_\R \int\!\!\!\!\int_{\R^2\times \R^2}  | \mrho{N_1}(\xi+\eta)\wh{f}(\xi+\eta) |^2 | \mrho{N_2}(\eta)\wh{g}(\eta)|^2
 \, \de(\tau +\dav |\xi+\eta|^2 -\dav |\eta|^2) d\eta d\xi d\tau\\
		&\les\normo{\mathcal M}_{L_{\tau,\xi}^\infty}\|P_{N_1}f\|_{L_x^2}^2\|P_{N_2}g\|_{L_x^2}^2,
	\end{align*}
	where
	\begin{align*}
		\mathcal M(\tau,\xi) = \int_{\R^2} \tmrho{N_1}(\xi+\eta) \,\tmrho{N_2}(\eta) \,\de(\tau +\dav |\xi+\eta|^2 -\dav |\eta|^2) d\eta .
	\end{align*}
	Therefore, it remains to prove that for all $\tau$ and $\xi$,
	\[
	\mathcal M(\tau,\xi) \les \frac{N_2}{N_1}.
	\]
	Note that
	\[
	\mathcal M(\tau,\xi)  =  \int_{\mathcal C}
	\frac{ \tmrho{N_1}(\xi+\eta) \tmrho{N_2}(\eta)}{|\nabla_\eta (\tau +\dav |\xi|^2 +2\dav \xi\cdot \eta) |}ds_\eta
	\]
	where $\mathcal C = \{\eta: \tau +\dav |\xi|^2 +2\dav \xi\cdot \eta=0\}$, see \cite[Theorem 6.1.5]{Hormander}. Thus,
	\[
	\mathcal M(\tau,\xi)  \les \int_{\mathcal C}\frac{\tmrho{N_1}(\xi+\eta)\tmrho{N_2}(\eta)}{|\xi|}ds_\eta \les \frac{N_2}{N_1}
	\]
	since $|\xi|\gtrsim N_1$ and the curve $\mathcal C \cap \supp(\tmrho{N_2})$ has a length of $\les N_2$.
 \end{proof}

\bigskip


By means of the bilinear Strichartz estimates, we are ready to prove the following bilinear estimates for two functions in appropriate adapted spaces $U_\De^p$ and $V_\De^p$. These bilinear estimates play a crucial role in proving our main theorem. 
\begin{proposition}\label{prop:bilinear-up}
\begin{theoremlist}
\item If $u_1, u_2\in U_\De^{1+2/d}$, then
\beq\label{eq:bilinear_Up2}
\|P_{N_1}u_1 \,P_{N_2}u_2 \|_{L_{t,x}^{1+2/d}} \les \left(\frac{N_{12}^{\min}}{N_{12}^{\max}}\right)^{\beta(d)}  \| P_{N_1}u_1\|_{U_\De^{1+2/d}}\| P_{N_2}u_2\|_{U_\De^{1+2/d}},
\end{equation}
where $\beta(1)=1/6$ and $\beta(2)=1/2$.
\item[\rm(ii-1)] For the one-dimensional case, if $u\in U_\De^{2}$ and $v\in V_\De^{2}$, then
\beq\label{eq:bilinear_V2}
\|P_{N_1}u\, P_{N_2}v \|_{L_{t,x}^{3}} \les \left(\frac{N_{12}^{\min}}{N_{12}^{\max}}\right)^{\frac{1}{6}}  \| P_{N_1}u\|_{U_\De^2}\| P_{N_2}v\|_{V_\De^2}.
\end{equation}
\item[\rm(ii-2)] For the two-dimensional case, if $u\in U_\De^{2}$ and $v\in V_\De^{2}$, then
\beq\label{eq:bilinear_V3}
\|P_{N_1}u \,P_{N_2}v \|_{L_{t,x}^{2}} \les \left(\frac{N_{12}^{\min}}{N_{12}^{\max}}\right)^{\frac{1}{4}}  \| P_{N_1}u\|_{U_\De^2}\| P_{N_2}v\|_{V_\De^2}.
\end{equation}
\end{theoremlist}
\end{proposition}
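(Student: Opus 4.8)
The plan is to derive all three estimates from the bilinear Strichartz estimates of Lemma~\ref{lem:bilinear} and \eqref{eq:bilinear1} by the transference principle (Lemma~\ref{lem:transfer}), together with the embeddings of Lemma~\ref{lem:embedd} and, for the two-dimensional endpoint, the logarithmic interpolation of Lemma~\ref{lem:log-inter}.

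First I would prove \eqref{eq:bilinear_Up2}. Let $\widetilde P_N$ be the fattened Littlewood--Paley projector with symbol $\tmrho{N}$, so that $\widetilde P_N P_N=P_N$ and $\|\widetilde P_N h\|_{\up}\le\|h\|_{\up}$. Apply Lemma~\ref{lem:transfer} to the bilinear operator $T_0(f,g):=\widetilde P_{N_1}f\cdot\widetilde P_{N_2}g$: its hypothesis is exactly a bilinear Strichartz estimate, since if $N_1\nsim N_2$ the widened frequency supports remain separated at scale $N_{12}^{\max}$ and hence $\|e^{\dav it\De}\widetilde P_{N_1}f\,e^{\dav it\De}\widetilde P_{N_2}g\|_{L_{t,x}^{1+2/d}}\les(N_{12}^{\min}/N_{12}^{\max})^{\beta(d)}\|f\|_{L_x^2}\|g\|_{L_x^2}$ by (the proof of) Lemma~\ref{lem:bilinear}, while if $N_1\sim N_2$ the same bound holds by \eqref{eq:bilinear1} because $(N_{12}^{\min}/N_{12}^{\max})^{\beta(d)}\approx1$ in that regime. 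Transference then produces a bilinear $T$ with $T(w_1,w_2)(t)=\widetilde P_{N_1}w_1(t)\,\widetilde P_{N_2}w_2(t)$ and $\|T(w_1,w_2)\|_{L_{t,x}^{1+2/d}}\les(N_{12}^{\min}/N_{12}^{\max})^{\beta(d)}\|w_1\|_{\up}\|w_2\|_{\up}$ for $p=1+2/d$; evaluating at $w_i=P_{N_i}u_i$ and using $\widetilde P_{N_i}P_{N_i}=P_{N_i}$ gives \eqref{eq:bilinear_Up2}. I note that the same argument also yields the analogous estimate when $u_1,u_2$ are replaced by functions frequency-supported in fixed dilates of $\{|\xi|\sim N_1\}$ and $\{|\xi|\sim N_2\}$; this localized form is used below.

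Next, \eqref{eq:bilinear_V2} follows almost immediately: since $d=1$ gives $1+2/d=3>2$, Lemma~\ref{lem:embedd} provides $\utwo\hookrightarrow U_\De^3$ and $\vtwo\hookrightarrow U_\De^3$, so $P_{N_1}u,\,P_{N_2}v\in U_\De^3$, and \eqref{eq:bilinear_Up2} with $p=3$ and $\beta(1)=1/6$ together with these embedding bounds gives
\[
\|P_{N_1}u\,P_{N_2}v\|_{L_{t,x}^{3}}\les\Big(\tfrac{N_{12}^{\min}}{N_{12}^{\max}}\Big)^{1/6}\|P_{N_1}u\|_{U_\De^3}\|P_{N_2}v\|_{U_\De^3}\les\Big(\tfrac{N_{12}^{\min}}{N_{12}^{\max}}\Big)^{1/6}\|P_{N_1}u\|_{\utwo}\|P_{N_2}v\|_{\vtwo}.
\]
For \eqref{eq:bilinear_V3} this shortcut fails, because $d=2$ gives $1+2/d=2$ and $\vtwo$ does not embed into $\utwo$; here I would interpolate between \eqref{eq:bilinear_Up2} and a crude, gain-free estimate. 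The crude estimate is $\|P_{N_1}u\cdot h\|_{L_{t,x}^2}\le\|P_{N_1}u\|_{L_{t,x}^4}\|h\|_{L_{t,x}^4}\les\|P_{N_1}u\|_{\utwo}\|h\|_{\ufou}$, obtained from Cauchy--Schwarz, the $L_{t,x}^4$ Strichartz estimate of Lemma~\ref{lem:stri} (admissible in $d=2$), Lemma~\ref{lem:transfer}, and $\utwo\hookrightarrow\ufou$; it settles the case $N_1\sim N_2$ at once (take $h=P_{N_2}v$, use $\vtwo\hookrightarrow\ufou$, and note $(N_{12}^{\min}/N_{12}^{\max})^{1/4}\approx1$). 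For $N_1\nsim N_2$, normalize $\|P_{N_2}v\|_{\vtwo}=1$; for each $M\ge1$, Lemma~\ref{lem:log-inter} with $p=4$ splits $P_{N_2}v=v_M+w_M$ with $\|v_M\|_{\utwo}\le M/\ka$ and $\|w_M\|_{\ufou}\le e^{-M}$, and after applying $\widetilde P_{N_2}$ (which fixes $P_{N_2}v$ and does not increase the norms) we may assume $v_M,w_M$ are frequency-supported near $N_2$. Estimating $P_{N_1}u\cdot v_M$ by the localized form of \eqref{eq:bilinear_Up2} with $\beta(2)=1/2$ and $P_{N_1}u\cdot w_M$ by the crude estimate gives, with $A:=(N_{12}^{\min}/N_{12}^{\max})^{1/2}$,
\[
\|P_{N_1}u\,P_{N_2}v\|_{L_{t,x}^2}\les\Big(\tfrac{M}{\ka}A+e^{-M}\Big)\|P_{N_1}u\|_{\utwo}\qquad\text{for all }M\ge1;
\]
optimizing over $M$ (for instance $M=\max\{1,\log(\ka/A)\}$) and using $A\big(1+\log(1/A)\big)\les A^{1/2}$ for $A\in(0,1]$ yields \eqref{eq:bilinear_V3}.

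The genuinely substantive step is this last one: the $V^2$-endpoint in \eqref{eq:bilinear_V3} cannot be reached by a plain embedding, so the logarithmic interpolation is unavoidable, and the key point is that optimizing the splitting parameter $M$ costs a factor $1+\log(N_{12}^{\max}/N_{12}^{\min})$, which is exactly what the drop of the frequency-gain exponent from $1/2$ in \eqref{eq:bilinear_Up2} to $1/4$ in \eqref{eq:bilinear_V3} is designed to absorb. The derivations of \eqref{eq:bilinear_Up2} and \eqref{eq:bilinear_V2} are essentially bookkeeping; the only recurring technical nuisance is keeping the Littlewood--Paley localizations compatible with the atomic and interpolation decompositions, which is handled by the boundedness of the projectors on $\up$ and $V_\De^p$ and by the fattened projectors $\widetilde P_N$ with $\widetilde P_N P_N=P_N$.
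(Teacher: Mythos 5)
Your proposal is correct and follows essentially the same route as the paper: part (i) by transference from the bilinear Strichartz estimates, (ii-1) by the embeddings $U_\De^2, V_\De^2\hookrightarrow U_\De^3$, and (ii-2) by treating $N_1\sim N_2$ with the crude $L^4\times L^4$ bound and $N_1\nsim N_2$ with Lemma~\ref{lem:log-inter} at $p=4$ and the choice $M\approx\log(N_{12}^{\max}/N_{12}^{\min})$. Your extra care with the fattened projectors $\widetilde P_N$ to keep the interpolation decomposition frequency-localized is a slightly cleaner way of justifying the paper's assertion that $\supp(\hatt{w}_1)\sim N_2$, but it is not a different argument.
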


\begin{proof}

Applying Lemma \ref{lem:transfer} to  \eqref{eq:bilinear1} and \eqref{eq:bilinear} gives the first estimate \eqref{eq:bilinear_Up2}.
Using \eqref{eq:bilinear_Up2} for the one-dimensional case and the embeddings $V_\De^{2} \hookrightarrow U_\De^{3}$, $U_\De^{2} \hookrightarrow U_\De^{3}$, we obtain the second estimate \eqref{eq:bilinear_V2}.

It remains to prove the two dimensional bilinear estimate \eqref{eq:bilinear_V3} for two functions in $U_\De^2$ and $V_\De^2$. First, note that
\beq\label{eq:bilinear_Up}
	\|P_{N_1}u_1 \, P_{N_2}u_2  \|_{L_{t,x}^{2}} \les  \|P_{N_1}u_1\|_{L_{t,x}^{4}}\|P_{N_2}u_2\|_{L_{t,x}^{4}}
	\les \|P_{N_1}u_1\|_{U_\De^{4}}\|P_{N_2}u_2\|_{U_\De^{4}}
\eeq
for all $u_1, u_2\in U_\De^{4}$ which follows from the Cauchy-Schwarz inequality and Lemma \ref{lem:transfer} via the Strichartz estimate \eqref{eq:stri}.
Now let $u \in U_\De^2$ and $v \in V_\De^2$.
On one hand, if $N_1 \sim N_2$, then by \eqref{eq:bilinear_Up}, the embeddings $V_\De^{2} \hookrightarrow U_\De^{4}$ and $U_\De^{2} \hookrightarrow U_\De^{4}$, we see that
\begin{equation*}
		\normo{P_{N_1}u \, P_{N_2}v }_{L_{t,x}^2} \les
  \|P_{N_1}u\|_{U_{\De}^2}\|P_{N_2}v\|_{V_{\De}^2}.
\end{equation*}
On the other hand, if $N_1 \nsim N_2$, without loss of generality we assume that $N_1 \gg N_2$. By applying Lemma  \ref{lem:log-inter} with $p=4$, there exist $\kappa>0$, $w_1 \in \utwo$ and $w_2 \in U_\De^4$ such that $P_{N_2}v = w_1 +w_2$ and
\begin{align}\label{ineq:split}
	\frac{\ka}M \|w_1\|_{\utwo} + e^{M} \|w_2\|_{U_\De^{4}} \leq  \|P_{N_2}v\|_{\vtwo}
\end{align}
for all $M\geq 1$. Since $\supp (\hatt{w}_1)\sim N_2$, it follows from \eqref{eq:bilinear_Up2}, \eqref{eq:bilinear_Up},
and the embedding $U_\De^{2} \hookrightarrow U_\De^{4}$ that  
\begin{align*}
	\|P_{N_1}u\,P_{N_2}v \|_{L_{t,x}^2} &\leq \|(P_{N_1}u)  w_1 \|_{L_{t,x}^2} + \|(P_{N_1}u)  w_2\|_{L_{t,x}^2} \\
	&\les \left(\frac{N_2}{N_1}\right)^{\frac12}\|P_{N_1}u\|_{U_{\De}^2}\|w_1\|_{U_{\De}^2} + \|P_{N_1}u\|_{U_{\De}^2}\|w_2\|_{U_{\De}^4}\\
&= \|P_{N_1}u\|_{U_{\De}^2} \left[ \left(\frac{N_2}{N_1}\right)^{\frac12} \|w_1\|_{U_{\De}^2} + \|w_2\|_{U_{\De}^4} \right].
\end{align*}
If we let $M=-\log \frac{N_2}{N_1}$, then $M\geq 1$ and $ e^{-M} \le \left(\frac{N_2}{N_1}\right)^{1/4}\leq M^{-1}$ since $N_1 \gg N_2$.
Thus, by \eqref{ineq:split} with such an $M$, we have
\[
\left(\frac{N_2}{N_1}\right)^{\frac12} \|w_1\|_{U_{\De}^2} + \|w_2\|_{U_{\De}^4}
\le \left(\frac{N_2}{N_1}\right)^{\frac14} \left( \f{\kappa}{M}  \|w_1\|_{U_{\De}^2} + e^M \|w_2\|_{U_{\De}^4}\right)
\leq \left(\frac{N_2}{N_1}\right)^{\frac14} \|P_{N_2}v\|_{V_{\De}^2}
\]
and therefore
\[
 	\|P_{N_1}u \,P_{N_2}v \|_{L_{t,x}^2} \les \left(\frac{N_2}{N_1}\right)^{\f{1}{4}}\|P_{N_1}u \|_{U_{\De}^2}\|P_{N_2}v \|_{V_{\De}^2}.
\]
\end{proof}

\begin{remark}
   \begin{theoremlist}
   \item Note that the exponent of $N_{12}^{\min}/N_{12}^{\max}$ in each estimate is less than $1$, which we need. Additionally, observe that the one-dimensional bilinear estimates share the common exponent $1/6$, while the two-dimensional bilinear estimates have distinct exponents; $1/2$ and $1/4$. However, the exponent $1/4$ can be replaced by any positive value less than $1/2$.
   \item As seen in the proof, the bilinear estimate for two functions in $U_\De^2$ and $V_\De^2$ was proved through distinct methods in one and two dimensions. If we were to use the bilinear estimate \eqref{eq:bilinear_Up2} in two dimensions, it would prevent the derivation of the $V_\De^2$-norm of $v$ since \eqref{eq:bilinear_Up2} deduces the $U_\De^2$-norm of $v$. This is in contrast to the one-dimensional case where \eqref{eq:bilinear_Up2} deduces $U_\De^3$ and  consequently $V_\De^2$. This is why we employed Lemma \ref{lem:log-inter} for the two-dimensional case.
  \end{theoremlist}
\end{remark}

\section{Proof of Theorem \ref{thm:scattering}}\label{sec:mainproof}
In this section, we prove our main theorem by the contraction mapping argument.
To this end, for any time interval $I\subset \R$, we denote time-restricted space $\{ u= w\big|_{I}: w\in U^2_\De\}$ by $ U^2_\De(I)$ 
and define the Banach space $X_{I}$ as 
\begin{align*}
X_{I}:= \left\{  u \in C(I, L_x^2(\R^d)) \cap U^2_\De(I) : \sum_{N \ge 1} \|P_N u\|_{U_\De^2(I)}^2 < \infty \right\}
\end{align*}
equipped with the norm
\[
\|u\|_{X_{I}} := \left( \sum_{N \ge 1} \| P_N u\|_{U_\De^2(I)}^2  \right)^\frac12.
\]  
We denote by $\mathcal N(u)$ the nonlinear part in \eqref{eq:duhamel}
\begin{align*}
\mathcal N(u)(t) := i \int_0^t \int _0^1 e^{\dav i(t-s)\De}  e^{-ir\Delta} \left(|e^{ir\Delta} u|^{\frac4d} \, e^{ir\Delta} u \right)(s)\,drds
\end{align*}
and obtain the following estimates, first.
\begin{lemma}\label{lem:nonlinear}
For all $u, v\in X_{[0,\infty)}$, 
\beq\label{ineq:normX}
\|\mathcal N(u)\|_{X_{[0,\infty)}}\les \|u\|_{X_{[0,\infty)}}^{1+4/d}
\eeq
and
 \beq\label{ineq:distanceX}
\|\mathcal N(u) -\mathcal N(v) \|_{X_{[0,\infty)}}\les \left(\|u\|_{X_{[0,\infty)}}^{4/d}  + \|v\|_{X_{[0,\infty)}}^{4/d} \right) \|u-v\|_{X_{[0,\infty)}}.
\eeq
\end{lemma}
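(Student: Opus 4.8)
The plan is to establish \eqref{ineq:normX} first and then obtain \eqref{ineq:distanceX} by rerunning the argument on a telescoping identity. Since $\|\mathcal N(u)\|_{X_{[0,\infty)}}^2=\sum_{N\ge1}\|P_N\mathcal N(u)\|_{U_\De^2}^2$, it is enough to bound each $\|P_N\mathcal N(u)\|_{U_\De^2}$ by a dyadic quantity that is square-summable in $N$. First I would note that $e^{-\dav it\Delta}P_N\mathcal N(u)(t)=i\int_0^t\!\int_0^1 e^{-\dav is\Delta}P_N e^{-ir\Delta}\big(|e^{ir\Delta}u|^{4/d}e^{ir\Delta}u\big)(s)\,dr\,ds$ is absolutely continuous in $t$ with $L^1_{\mathrm{loc}}$ derivative (legitimate thanks to Minkowski's inequality and the a priori bounds below), so Lemma~\ref{lem:duality} applies with $p=2$. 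Moving the unitaries $e^{-\dav it\Delta}$ and $e^{-ir\Delta}$ onto the test function — using that $P_N$ is self-adjoint, that the $U_\De^2$- and $V_\De^2$-norms are invariant under $e^{ir\Delta}$ by \eqref{eq:persistence-up}--\eqref{eq:persistence-vp}, and that $P_N$ acts as a contraction on $V_\De^2$ — I would arrive at
\[
\|P_N\mathcal N(u)\|_{U_\De^2}\ \lesssim\ \sup_{\|v\|_{V_\De^2}\le1}\ \int_0^1\left|\int_0^\infty\!\!\int_{\R^d}\big(|e^{ir\Delta}u|^{4/d}e^{ir\Delta}u\big)(t,x)\ \overline{\big(e^{ir\Delta}P_Nv\big)(t,x)}\ dx\,dt\right|dr .
\]

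Next I would perform a Littlewood--Paley decomposition of each of the $1+4/d$ copies of $u$, writing the space--time integrand as a sum over dyadic frequencies $N_1,\dots,N_{1+4/d}$ of $\prod_{j}P_{N_j}(e^{ir\Delta}u)^{(\ast)}\cdot\overline{P_N(e^{ir\Delta}v)}$, where $(\ast)$ encodes the complex conjugations in $|z|^{4/d}z$. The spatial Fourier support of this integrand forces the output frequency $N$ to satisfy $N\lesssim N_1$ once the $N_j$ are relabelled so that $N_1\ge N_2\ge\cdots\ge N_{1+4/d}$; otherwise the contribution vanishes. Because $\sigma=2/d$, the integrand is a product of exactly $2+4/d$ functions, which Hölder's inequality, with all exponents equal to $1+2/d$, splits into $1+2/d$ bilinear blocks measured in $L^{1+2/d}_{t,x}$. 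I would pair the highest copy $P_{N_1}(e^{ir\Delta}u)$ with $\overline{P_N(e^{ir\Delta}v)}$ and pair the remaining copies consecutively in the ordering, $(P_{N_2}u,P_{N_3}u),(P_{N_4}u,P_{N_5}u),\dots$. Applying Proposition~\ref{prop:bilinear-up} — the $U_\De^2$--$V_\De^2$ estimates \eqref{eq:bilinear_V2}/\eqref{eq:bilinear_V3} to the block containing $v$ and the $U_\De^2$--$U_\De^2$ estimate \eqref{eq:bilinear_Up2} to the rest, composed with the embeddings $U_\De^2\hookrightarrow U_\De^{1+2/d}$ and $U_\De^2\hookrightarrow V_\De^2$ of Lemma~\ref{lem:embedd} — and invoking \eqref{eq:persistence-up}--\eqref{eq:persistence-vp} again to strip the $e^{ir\Delta}$'s, I would get, uniformly in $r\in[0,1]$ and with $\|v\|_{V_\De^2}\le1$ absorbed,
\[
\left|\,\iint \prod_{j}P_{N_j}(e^{ir\Delta}u)^{(\ast)}\ \overline{P_N(e^{ir\Delta}v)}\ \right|\ \lesssim\ \Big(\tfrac{N}{N_1}\Big)^{\!\theta}\ \prod_{\ell=1}^{2/d}\Big(\tfrac{N_{2\ell+1}}{N_{2\ell}}\Big)^{\!\theta}\ \prod_{j=1}^{1+4/d}\|P_{N_j}u\|_{U_\De^2},
\]
where $\theta=\theta(d)>0$ (one may take $\theta(1)=1/6$, $\theta(2)=1/4$), and the $dr$-integration contributes only the factor $1$.

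It then remains to sum in the dyadic variables. Writing $a_M:=\|P_Mu\|_{U_\De^2}$, so that $\|u\|_{X_{[0,\infty)}}=\|(a_M)\|_{\ell^2(2^{\Z})}$, each within-pair sum $\sum_{N_{2\ell+1}\le N_{2\ell}\le N_1}(N_{2\ell+1}/N_{2\ell})^{\theta}a_{N_{2\ell}}a_{N_{2\ell+1}}$ is $\lesssim\|(a_M)\|_{\ell^2}^2$, by Cauchy--Schwarz in $N_{2\ell}$ followed by Young's inequality for the $\ell^1$ kernel $(N_{2\ell+1}/N_{2\ell})^{\theta}\mathbf 1_{N_{2\ell+1}\le N_{2\ell}}$; hence $\|P_N\mathcal N(u)\|_{U_\De^2}\lesssim\|(a_M)\|_{\ell^2}^{4/d}\sum_{N_1\gtrsim N}(N/N_1)^{\theta}a_{N_1}$. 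A final application of Schur's test to the kernel $(N/N_1)^{\theta}\mathbf 1_{N\lesssim N_1}$ yields $\sum_N\|P_N\mathcal N(u)\|_{U_\De^2}^2\lesssim\|(a_M)\|_{\ell^2}^{8/d}\,\|(a_M)\|_{\ell^2}^2=\|u\|_{X_{[0,\infty)}}^{2+8/d}$, which is \eqref{ineq:normX} since $2+8/d=2(1+4/d)$. For \eqref{ineq:distanceX} one expands $|e^{ir\Delta}u|^{4/d}e^{ir\Delta}u-|e^{ir\Delta}v|^{4/d}e^{ir\Delta}v$ as a telescoping sum of $1+4/d$ products, each carrying exactly one factor of the form $e^{ir\Delta}(u-v)$ (or its conjugate) and $4/d$ factors among $e^{ir\Delta}u,e^{ir\Delta}v$ and their conjugates; running the identical argument on each term puts the difference factor into $\|u-v\|_{X_{[0,\infty)}}$ and the others into $\|u\|_{X_{[0,\infty)}}$ or $\|v\|_{X_{[0,\infty)}}$, and collecting the terms gives the claimed bound $(\|u\|_{X_{[0,\infty)}}^{4/d}+\|v\|_{X_{[0,\infty)}}^{4/d})\|u-v\|_{X_{[0,\infty)}}$.

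The step I expect to require the most care is the dyadic summation. A crude bound that extracts a gain only from the interaction of the two highest frequencies leaves the remaining $4/d$ low frequencies to be summed in $\ell^1$, which diverges. The resolution uses the mass-critical exponent $\sigma=2/d$ crucially: it is exactly what makes all $1+2/d$ Hölder blocks genuine bilinear $L^{1+2/d}$ pairings, each contributing its own dyadic gain $(N^{\min}/N^{\max})^{\theta}$; pairing the highest copy against the output frequency and the remaining copies against each other distributes these gains so that every dyadic sum closes in $\ell^2$ via Young's and Schur's inequalities. A minor but necessary technical point is the verification that $e^{-\dav it\Delta}P_N\mathcal N(u)$ is absolutely continuous and that the $dr$-integral may be interchanged with the space--time pairing, both of which follow routinely from the a priori bounds.
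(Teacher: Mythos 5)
Your proof is correct and follows essentially the same route as the paper's: duality via Lemma \ref{lem:duality}, Littlewood--Paley decomposition of the nonlinearity, H\"older's inequality splitting the product into $1+2/d$ bilinear blocks estimated by Proposition \ref{prop:bilinear-up} together with \eqref{eq:persistence-vp}--\eqref{eq:persistence-up}, and dyadic summation via Young/Schur. The only cosmetic difference is the pairing: you attach the highest-frequency copy of $u$ to the test function and invoke the Fourier-support constraint $N\lesssim N_1$ to close the sum in the output frequency, whereas the paper pairs an arbitrary copy of $u$ with the test function and closes all sums using only the $(N^{\min}/N^{\max})^{\theta}$ gains from the bilinear estimates --- both work.
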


\begin{proof}
 Let $u\in X_{[0,\infty)}$ and denote its zero extension to $\R$ again by $u$. Then clearly $\| u\|_{X_\R}=\|u\|_{X_{[0,\infty)}}$.
Using the definition of $U_\De^2$ and the fact that $e^{-\dav it\Delta}$ and $P_N$ commute, we have
\beq\label{ineq:Up}
\begin{aligned}
	\|P_N \mathcal N(u)\|_{\utwo} &= \normo{e^{-\dav it\Delta }P_N \mathcal N(u)}_{U^2}\\
	&= \left\| P_N \int_0^t \!\! \int _0^1 e^{-\dav is\De}  e^{-ir\Delta} (|e^{ir\Delta} u|^{\frac4d} e^{ir\Delta} u)(s)\, drds\right\|_{U^2}.
 \end{aligned}
 \eeq
Applying Lemma \ref{lem:duality} to \eqref{ineq:Up} yields
\begin{align*}
\begin{aligned}
\|P_N \mathcal N(u)\|_{\utwo} &=\sup_{\substack{  \|\phi\|_{V^2}=1} } \left|  \int_\R\int _0^1 \left<   P_N e^{-\dav it\De}  e^{-ir\Delta}  (|e^{ir\Delta} u|^{\frac4d} e^{ir\Delta} u)(t) ,\phi(t)  \right>_{L^2_x} dr dt\right|\\
 &=\sup_{\substack{  \|\phi\|_{V^2}=1} } \left|  \int_\R \int _0^1 \left<    (|e^{ir\Delta} u|^{\frac4d} e^{ir\Delta} u)(t) ,   e^{ir\Delta} e^{\dav it\De} P_N\phi(t)  \right>_{L^2_x} dr dt\right|\\
	&= \sup_{\substack{ \|\vp\|_{V_\Delta^2}=1} } \left| \int_\R\int _0^1 \!\!\! \int_{\R^d}  |e^{ir\Delta} u(t,x)|^{\frac4d} e^{ir\Delta} u(t,x)\, \overline {P_N e^{ir\Delta}\vp(t,x)}\, dx drdt\right|,
\end{aligned}
\end{align*}
where we set $ \vp(t,x)= e^{\dav it\De} \phi (t,x)$, then $\vp\in V_\De^2$ with $\|\vp\|_{V_\Delta^2}=1$. Then we have
\begin{align*}
	\|\mathcal N(u)\|_{X_\R}^2
	&=\sum_{N \ge 1}\|P_{N} \mathcal N(u)\|_{\utwo}^2 \\
	&= \sum_{N \ge 1}\sup_{\substack{\|\vp\|_{V_\Delta^2}=1} } \left|  \int _0^1 \!\! \int_\R \int_{\R^d} |e^{ir\Delta} u(t,x
	)|^{\frac4d} e^{ir\Delta} u(t,x)\,  \overline {P_{N}e^{ir\Delta}\vp(t,x)}\, dx dtdr \right|^2\\
 &\leq \sum_{N \ge 1}\sup_{\substack{\|\vp\|_{V_\Delta^2}=1} } \left(  \int _0^1 \left|\int_\R \int_{\R^d} |e^{ir\Delta} u(t,x
	) |^{\frac4d} e^{ir\Delta} u(t,x) \, \overline {P_{N}e^{ir\Delta}\vp(t,x)}\, dx dt \right|dr \right)^2.
 \end{align*}

To obtain \eqref{ineq:normX} and \eqref{ineq:distanceX}, let us consider the two-dimensional case only since the case of $d=1$ is done similarly. The dyadic decomposition  and  the Cauchy-Schwartz inequality lead us to that
 \begin{align}
&	\left|\int_\R\int_{\R^2} e^{ir\Delta} u(t,x)  \overline {e^{ir\Delta} u(t,x)}e^{ir\Delta} u(t,x) \overline {P_{N_4}e^{ir\Delta}\vp(t,x)}\, dx dt \right| \notag \\
& \leq \sum_{N_1, N_2, N_3\geq 1}  \left|\int _\R \int_{\R^2} P_{N_1}e^{ir\Delta} u(t,x)  \overline {P_{N_2}e^{ir\Delta} u(t,x)} P_{N_3}e^{ir\Delta} u(t,x) \overline {P_{N_4}e^{ir\Delta}\vp(t,x)}\, dxdt \right| \notag\\
 &\leq  \sum_{N_1, N_2\geq 1} \normo{ P_{N_1}e^{ir\Delta} u\, P_{N_2}e^{ir\Delta} u }_{L_{t,x}^2} \sum_{N_3\geq1 }\normo{ P_{N_3}e^{ir\Delta} u \, P_{N_4}e^{ir\Delta}\vp }_{L_{t,x}^2}.
 \label{eq:dyadic}
 \end{align}
For the first factor in \eqref{eq:dyadic}, we claim that
\beq \label{eq:claim}
\sum_{N_1, N_2\geq 1} \normo{ P_{N_1}e^{ir\Delta} u\, P_{N_2}e^{ir\Delta} u }_{L_{t,x}^2} \lesssim \|u\|_{X_\R}^2.
\eeq
Indeed, we first note that by Proposition \ref{prop:bilinear-up} (i) and \eqref{eq:persistence-up},
  \beq\label{eq:bilinear_sum_1d}
  \|P_{N_1}e^{ir\Delta}u \, P_{N_2}e^{ir\Delta}u\|_{L_{t,x}^{2}}
   \lesssim  \left(\frac{N_{12}^{\min}}{N_{12}^{\max}}\right)^{\f{1}{2}}  \| P_{N_1}u\|_{U_\De^{2}}\| P_{N_2}u\|_{U_\De^{2}}.
  \eeq
If $N_1 \sim N_2$, then $\frac{N_{12}^{\min}}{N_{12}^{\max}} \sim 1$ and, therefore,  we use the Cauchy-Schwartz inequality to obtain
\[
    	\sum_{N_1\sim N_2} \|P_{N_1}e^{ir\Delta}u\,  P_{N_2}e^{ir\Delta}u \|_{L_{t,x}^{2}}
     \lesssim \sum_{N_1\sim N_2} \| P_{N_1}u\|_{U_\De^{2}}\| P_{N_2}u\|_{U_\De^{2}}
     \les \sum_{N\ge 1}\|P_{N} u\|^2_{U_{\De}^2}=\|u\|_{X_\R} ^2.
\]
On the other hand, if $N_1 \nsim N_2$, we may assume that $N_1 \gg N_2$ without loss of generality.
Then, by \eqref{eq:bilinear_sum_1d}, the Cauchy-Schwartz inequality, and Young's convolution inequality for $\ell^p$-spaces,
we get
\beq
\begin{aligned}\label{eq:young}
&\sum_{N_1 \gg N_2} \|P_{N_1}e^{ir\Delta}u \, P_{N_2}e^{ir\Delta}u \|_{L_{t,x}^{2}}\\
& \lesssim 	\sum_{N_1 \gg N_2} \left(\frac{N_2}{N_1} \right)^\frac12\|P_{N_1}u\|_{U_{\De}^2}\|P_{N_2}u\|_{U_{\De}^2} \\
&\leq  \sum_{N \ge 1} \|P_{N} u\|_{U_\De^2} \left( S_{N} *_\ell \|P_{N}u\|_{U_\De^2}\right)_{N}  \\
&\leq  \Bigg(\sum_{N \ge 1}\|P_{N} u\|^2_{U_{\De}^2}\Bigg)^{\f{1}{2}} \Bigg( \sum_{N \ge 1} \left| \left( S_{N} *_\ell \|P_{N}u\|_{U_\De^2}\right)_{N} \right| ^2 \Bigg)^{\f{1}{2}}\\
&\les  \sum_{N \ge 1}\|P_{N} u\|^2_{U_{\De}^2}= \|u\|_{X_\R}^2,
\end{aligned}
\eeq
which proves the claim \eqref{eq:claim}.
Here, we used the notation that the convolution of $\{a_N\}_{N\geq1}$ and $\{b_N\}_{N\geq 1}$ is given by
\begin{align*}
	(a_N *_\ell b_N)_N := \sum_{\wt{N}\geq 1} a_{N/\wt{N}} b_{\wt{N}}
\end{align*}
and
\begin{align*}
	S_N = \begin{cases}
		N^{-\frac12}  & \mbox{ for } N \ge 1\\
		0       & \mbox{ for } 0 < N <1.
	\end{cases}
\end{align*}
Hence, \eqref{eq:dyadic} and \eqref{eq:claim} yield
\begin{equation}\label{eq:a1}
\begin{aligned}
\|\mathcal N(u)\|_{X_\R}^2
&\les \sum_{N_4 \ge 1}\sup_{\|\vp\|_{V_\Delta^2}=1 }\left(\int_0^1 \|u\|_{X_\R}^2
  \sum_{N_3\geq1 }\normo{ P_{N_3}e^{ir\Delta} u \, P_{N_4}e^{ir\Delta}\vp }_{L_{t,x}^3}   dr \right)^2\\
  & = \|u\|_{X_\R}^4\sum_{N_4 \ge 1}\sup_{\|\vp\|_{V_\Delta^2}=1 }\left( \int_0^1
  \sum_{N_3\geq1 }\normo{ P_{N_3}e^{ir\Delta} u \, P_{N_4}e^{ir\Delta}\vp }_{L_{t,x}^2}   dr\right)^2.
	\end{aligned}
\end{equation}
Now, by applying Proposition \ref{prop:bilinear-up} (ii-2) to \eqref{eq:a1},
we have
\begin{align}
	\|\mathcal N(u)\|_{X_\R}^2
	&\les \|u\|_{X_\R}^4  \sum_{N_4 \ge 1} \sup_{\|\vp\|_{V_\Delta^2}=1 } \left( \int_0^1 \sum_{N_3 \ge 1 }   \left(\frac{N_{34}^{\min}}{N_{34}^{\max}} \right)^\frac14 \| P_{N_3}e^{ir\Delta} u \|_{U_{\De}^2}  \| P_{N_4}e^{ir\Delta}\vp \|_{V_{\De}^2}    dr \right)^2 \notag \\
	& \leq \|u\|_{X_\R}^4 \sum_{N_4 \ge 1} \left( \sum_{N_3 \ge 1 }   \left(\frac{N_{34}^{\min}}{N_{34}^{\max}} \right)^\frac14 \| P_{N_3} u \|_{U_{\De}^2}   dr \right)^2,\label{ineq:N-1}
\end{align}
where we used   \eqref{eq:persistence-vp}, \eqref{eq:persistence-up} and the fact that $\|P_N \vp \|_{V_{\De}^2}  \leq \|\vp \|_{V_{\De}^2}$.
Again, as in the argument of \eqref{eq:young},
using Young's convolution inequality, one sees
\begin{align*}
	\sum_{N_4 \ge 1} \left( \sum_{N_3 \ge 1 }   \left(\frac{N_{34}^{\min}}{N_{34}^{\max}} \right)^\frac14 \| P_{N_3} u \|_{U_{\De}^2}   dr \right)^2 \les \|u\|_{X_\R}^2,
\end{align*}
which together with \eqref{ineq:N-1} proves \eqref{ineq:normX}.

In order to prove the second bound \eqref{ineq:distanceX},  if we define
\[
\mathcal N(u_1, u_2, u_3) := i\int_0^t \int _0^1 e^{\dav i(t-s)\De}  e^{-ir\Delta} \left(e^{ir\Delta} u_1 \overline{e^{ir\Delta} u_2} e^{ir\Delta} u_3 \right)(s)\,drds,
\]
then the same argument as \eqref{ineq:normX} together with Proposition \ref{prop:bilinear-up} gives
\[
\| \mathcal N(u_1, u_2, u_3) \|_{X_\R}\lesssim \|u_1\|_{X_\R}\|u_2\|_{X_\R}\|u_3\|_{X_\R}.
\]
Then we complete the proof, once we observe that
\[
\mathcal N(u) -\mathcal N(v)= \mathcal N(u-v, u,u) + \mathcal N(v, u-v, u)+\mathcal N(v,v,u-v).
\]
\end{proof}

\begin{proof}[Proof of Theorem \ref{thm:scattering}]
 Due to the time reversal symmetry, we consider the forward case $t \in [0,\infty)$ only.
For each $\de>0$, we consider the complete metric space $(B_\de,d)$ defined by
\begin{align*}
	B_\de:= \left\{  u \in  C([0,\infty), L_x^2(\R^d)) \cap U^2_\De ([0,\infty)): \|u\|_{X_{[0,\infty)}} \le \de \right\},
\end{align*}
equipped with the distance
\begin{align*}
	d(u,v) : = \|u-v\|_{X_{[0,\infty)}}.
\end{align*}
Define the map $\Phi$ on $(B_\de,d)$ by
\begin{align*}
	\Phi(u) = e^{\dav it\Delta}u_0 + i\int_0^t \int _0^1 e^{\dav i(t-s)\De}  e^{-ir\Delta} (|e^{ir\Delta} u|^{\frac4d} \, e^{ir\Delta} u)(s) drds.
\end{align*}
Its linear part can be bounded as
\begin{equation*}
\begin{aligned}
    \|e^{\dav it\Delta} u_0 \|_{X_{[0,\infty)}}^2 &= \sum_{N\ge1} \|P_N e^{\dav it\Delta}u_0\|_{\utwo ([0,\infty))}^2\\
    &= \sum_{N\ge1} \|P_N u_0\|_{U^2 ([0,\infty))}^2 \\
    &= \sum_{N\ge1} \|P_N u_0\|_{L_x^2}^2 \approx \|u_0\|_{L_x^2}^2,
 \end{aligned}
\end{equation*}
since any normalized function in $L_x^2(\R^d)$ is a $U^2$-atom.
Thus, by Lemma \ref{lem:nonlinear}, there exists a positive constant $C$ such that
\beq\label{eq:cont1}
\|\Phi(u)\|_{X_{[0,\infty)}} \le C \left(\|u_0\|_{L_x^2} + \de^{1+4/d} \right)
\eeq
and
\beq\label{eq:cont2}
d(\Phi(u)- \Phi(v))\le C \de^{4/d} d(u,v)
\eeq
for all $u,v \in B_\de$.
If $\de>0$ is chosen to be small enough so that $2C\de^{4/d}<1$ and the initial data $u_0$ satisfies $2C\|u_0\|_{L_x^2} \le \de$, then it follows from \eqref{eq:cont1} and \eqref{eq:cont2} that $\Phi$ is a self-contraction on $(B_\de,d)$.
Thus, $\Phi$ has a fixed point $u$ which is the unique global solution to \eqref{eq:main} in $ B_\de$.

Now, we show the scattering result in $L_x^2(\R^d)$. For the initial data $u_0\in L_x^2(\R^d)$ with $\|u_0\|_{L_x^2}\les \delta$, the global solution $u$ satisfies
\[
u(t) = e^{\dav it \Delta}\left(u_0 + ie^{-\dav it \Delta}  \mathcal N (u) (t)\right).
\]
Moreover, it follows from \eqref{ineq:normX} and the embedding $U_\De^{2} \hookrightarrow V_\De^{2}$ that
\begin{align*}
	\sum_{N \ge 1} \|P_N \mathcal N (u)\|_{\vtwo}^2 \les \sum_{N \ge 1} \|P_N \mathcal N (u)\|_{\utwo}^2 \les 1.
\end{align*}
By applying Lemma \ref{lem:limit}, there exists a limit
\begin{align*}
	u_0 +   e^{-\dav it \Delta}\sum_{N \ge 1} P_N \mathcal N(u)(t)\to u_+
\end{align*}
in $L_x^2(\R^d)$ as $t \to \infty$, which implies the following scattering phenomenon
\begin{align*}
	\|u(t) - e^{\dav it\Delta} u_+\|_{L_x^2} \xrightarrow{t \to \infty} 0.
\end{align*}
This completes the proof of Theorem \ref{thm:scattering}.
\end{proof}

 \begin{remark}\label{rem:general-regul}
	Regarding the scattering phenomenon on $H^s(\R^d)$ for any $s > 0$, we can apply a similar approach as described above. More precisely, we replace $X_{[0,\infty)}$ by the Banach space $X^s_{[0,\infty)}$ defined by
	\begin{align*}
		X^s_{[0,\infty)}:= \Bigg\{  u \in C([0,\infty), H^s(\R^d)) \cap U^2_\De ([0,\infty))  : \|u\|_{X^s_{[0,\infty)}} < \infty \Bigg\}
	\end{align*}
 equipped with the norm
\[
\|u\|_{X^s_{[0,\infty)}} := \Bigg(\sum_{N \ge 1} N^{2s}\|P_N u\|_{U_\De^2([0,\infty))}^2 \Bigg)^\frac12, 
\]  
and obtain improved bilinear estimates. This leads to a small data global well-posedness and scattering on $H^s(\R^d)$ for any $s>0$ via the contraction mapping argument.
	\end{remark}


	\noindent
	\textbf{Acknowledgements:}
	The authors are supported by the National Research Foundation of Korea (NRF) grants funded by the Korean government (MSIT) NRF-2019R1A5A1028324, (MSIT) NRF-2020R1A2C1A01010735, (MOE) NRF-2021R1I1A1A01045900 and (MOE) NRF-2022R1I1A1A01056408.
	
	\renewcommand{\thesection}{\arabic{chapter}.\arabic{section}}
	\renewcommand{\theequation}{\arabic{chapter}.\arabic{section}.\arabic{equation}}
	\renewcommand{\thetheorem}{\arabic{chapter}.\arabic{section}.\arabic{theorem}}

	\bibliographystyle{abbrv}

	\def\cprime{$'$}
	\small{

\end{document}